\newtheorem{theorem}{Theorem}[section]
\newtheorem{prop}[theorem]{Proposition}
\newtheorem{lemma}[theorem]{Lemma}
\newcommand{\R}{{\mathbb R}}
\newcommand{\Z}{{\mathbb Z}}
\newcommand{\N}{{\mathbb N}}
\renewcommand{\le}{\leq}
\newcounter{todo}
\newcommand{\op}[1]{\!\!\mathop{\rm ~#1}\nolimits}
\renewcommand{\geq}{\geqslant}
\renewcommand{\leq}{\leqslant}
\def\Hil{\mathcal{H}}
\newenvironment{remark}{\refstepcounter{theorem}\par\medskip\noindent{\bf
Remark~\thetheorem.}}{\unskip\nobreak\hfill\hbox{ $\oslash$}\par\bigskip}
\begin{document}

\title[Euler\--MacLaurin formulas]{Euler\--MacLaurin formulas\\
via differential operators}
\author{Yohann Le Floch  \,\,\,\,\,\,\,\,\,\,\, \,\,\,\,\, \'Alvaro Pelayo}

\maketitle

\begin{abstract}
Recently there has been a renewed interest in asymptotic
Euler\--MacLaurin formulas, partly due to applications to spectral theory of differential operators.
Using elementary means, we recover such formulas for compactly supported
smooth functions $f$ on intervals, polygons, and $3$\--dimensional polytopes 
$\Delta$, where the coefficients in the asymptotic
expansion are sums of differential
operators involving only derivatives of $f$ in directions normal to
the faces of $\Delta$. Our formulas  apply to wedges of 
any dimension. This paper builds on, and is motivated by, works
of Guillemin, Sternberg, and others, in the past ten years.
\end{abstract}

\section{Introduction} \label{sec:intro}

\subsection{The context}

Let $\Delta$ be a polytope in $\R^n$. Euler\--MacLaurin formulas are expressions which may be used to 
approximate Riemann sums of smooth functions $f \colon \mathbb{R}^n \to \R$
such as
\begin{equation} \frac{1}{N^n} \sum_{k \in \mathbb{Z}^n \cap N\Delta} f \Big( \frac{k}{N}\Big), \label{eq:sum}\end{equation}
in terms
of integrals involving $f$. Such formulas may be traced back to the work of L. Euler and C. MacLaurin in the first half
of the XIX century. Partly due to its connections to other areas of mathematics, this topic  has attracted great interest recently,
see for instance Berline\--Vergne~\cite{BeVe2007}, Brion\--Vergne~\cite{BrVe1997}, Cappell\--Shaneson~\cite{CS},
Guillemin\--Sternberg~\cite{GuSt2007}, Guillemin\--Sternberg\--Weitsman~\cite{GSW}, Guillemin\--Stroock~\cite{GS}, Guillemin\--Wang~\cite{GuWa08}, Karshon\--Sternberg\--Weitsman~\cite{KSW}, Shaneson~\cite{Sh94}, Tate \cite{Ta2010}, and Vergne~\cite{Ve13}. Let us also mention that in \cite[Section~5, Theorem 5.9]{ChVN08} Charles and  V\~u Ng\d oc obtained an asymptotic expansion for a sum over integral points of a convex polytope as a consequence of a trace formula, and computed explicitly the first term of this expansion.

Our  interest in Euler\--MacLaurin formulas originates mainly in the applications they have in spectral theory of differential operators, see for instance the recent articles of Burns\--Guillemin\--Wang~\cite{BuGuWa} and Zelditch~\cite{Ze2009}. Euler\--MacLaurin formulas are also 
connected to major problems in number theory, see eg. Lagarias~\cite{La2013}. At least in dimension one, these formulas have many other applications in other areas, for instance in numerical analysis and computational mathematics, see e.g. Berrut~\cite{Ber} and Tausch~\cite{Tau10}, in actuarial sciences, for instance simple approximations in terms of annuities with annual payments are available based on Euler\--MacLaurin formulas (see \cite{Mac} and the references therein), and in the study of the Casimir effect problem, see for instance \cite{Cas,Dow}. In \cite{CosGar}, Costin and Garoufalidis describe applications of Euler\--MacLaurin formulas to linear difference equations with a small parameter, and to the study of knotted 3-dimensional objects.

Recently, Tate~\cite{Ta2010} obtained concrete Euler\--MacLaurin formulas to approximate Riemann sums of the form (\ref{eq:sum}) for smooth functions on lattice polytopes in any dimension, building primarily on the work of Berline and Vergne~\cite{BeVe2007}. As a consequence of this general result and its rather involved proof, Tate obtained a fully explicit formula for Riemann sums over Delzant polygons (Figure~\ref{fig:polytopes}), that is Delzant polytopes in dimension 2. This case is already of interest because such Delzant polygons arise as images of the moment maps of 4-dimensional toric manifolds.

Let us briefly explain how toric manifolds enter the picture. Let $(M,\omega)$ be a compact connected symplectic toric manifold, $\mu:M \to \R^n$ the underlying moment map, and $\Delta = \mu(M)$ the associated Delzant polytope. Assume that there exists a prequantum line bundle $L \to M$, that is a holomorphic, Hermitian line bundle which Chern connection has curvature $-i\omega$, and consider the space $\Hil_k = H^0(M,L^{\otimes k})$ of holomorphic sections of $L^{\otimes k}$; $\Hil_k$ is the Hilbert space of geometric quantization. A standard result states that the dimension of $\Hil_k$ is equal to the number ${\string #} \left( \Z^n \cap k\Delta/(2\pi)\right)$ of integer points in the polytope $k\Delta/(2\pi)$ (see \cite{Ham} for a nice exposition in the case $k=1$\footnote{And with a different convention which explains some differences by factors $2\pi$.}), that is the so-called Ehrhart polynomial \cite{Ehr} of this polytope. An analogous result for toric varieties was one of the motivations behind works by Khovanski{\u\i}~\cite{Khov77,Khov78}, followed by Khovanski{\u\i} and Pukhlikov~\cite{KhovPuk92a,KhovPuk92b} (see also Kantor and Khovanski{\u\i}~\cite{KanKhov93}), providing formulas for sums of values of $f$ on lattice points of a lattice polytope, where $f$ is of the form polynomial times exponential.\\
More generally, let $(T_{1,k},\ldots,T_{n,k})$ be commuting self-adjoint Berezin-Toeplitz operators (each $T_{i,k}$ acts on the space $\Hil_k$ defined above; for more details, see for instance \cite{Schli10} and references therein) whose principal symbols are the components of the moment map $\mu:M \to \R^n$. A result recently obtained by Charles, Pelayo and V\~u Ng\d oc~\cite[Theorem $1.1
1$]{ChPeVN2013} states that the joint spectrum $\mathcal{JS}$ (consisting of joint eigenvalues) of $T_{1,k},\ldots,T_{n,k}$ coincides, up to $\mathcal{O}(k^{-\infty})$, with the set
\[ g\left(\Delta \cap \left(v + \frac{2\pi}{k} \Z^n \right); k\right) \] 
where $v$ is any vertex of the Delzant polytope $\Delta = \mu(M)$ and the sequence of functions $g(.;k):\R^n \to \R^n$ admits a ${\rm C}^{\infty}$-asymptotic expansion of the form $g(.,k) = \mathrm{Id} + k^{-1} g_1 + k^{-2} g_2 + \ldots$, where each $g_j: \R^n \to \R^n$ is smooth. Thus, given any smooth function $F: \R^n \to \R$, the trace of the operator $F(T_{1,k}, \ldots, T_{n,k})$ is equal to
\[ \sum_{\lambda \in \mathcal{JS}} F(\lambda) = \sum_{x \in \Z^n \cap k\Delta/(2\pi)} \varphi\left(\frac{x}{k}\right), \]
where $\varphi: \R^n \to \R$ is defined by the formula $\varphi(x) = F(g(v + 2\pi x))$ (the particular case $F=1$ corresponds to the computation of the dimension of $\Hil_k$, see discussion above). The right-hand side of this equality is of the form (\ref{eq:sum}), up to a multiplicative factor $k^n$, and can thus be evaluated thanks to Euler\--MacLaurin formulas.

\subsection{Aim and structure of the paper}

The goal of this paper is double. Firstly, we recover some of Tate's results by elementary methods--different from those used in~\cite{Ta2010}--starting from a result due to Guillemin and Sternberg~\cite{GuSt2007}, which was itself derived by elementary means. More precisely, we obtain explicit Euler\--MacLaurin formulas to approximate (\ref{eq:sum}) for compactly supported functions $f$ on wedges in $\R^n$, intervals, polygons, and polytopes $\Delta$ in $\R^3$, where the coefficients in the asymptotic expansion are sums of differential operators involving only derivatives of $f$ in directions normal to the faces of $\Delta$. Secondly, we provide explicit examples in dimension 2; the first one is fully explicit, in the sense that the asymptotic expansion of (\ref{eq:sum}) is finite and that we provide all of its coefficients, while for the second one we provide some numerical evidence of the correctness of this asymptotic expansion.

The only reason to restrict ourselves to intervals, polygons and $3$\--polytopes, is because of technical difficulties due to our method of proof. We hope to be able to extend our techniques to higher dimensions in future works. However, we believe that the methods and results presented here are themselves already interesting, understandable in the sense that they only involve elementary computations, and may serve as a comprehensive introduction to the more general topic of Euler\--MacLaurin formulas over lattice polytopes in any dimension as treated in~\cite{Ta2010}. Furthermore, despite the simplicity of the computations involved in the cases that we treat, we have not seen them explicitly written anywhere, which constitutes another reason to think that this proof is worth presenting on its own.

The structure of the paper is as follows: in Section~\ref{mainM} we will state our main result, and the remaining sections of the
paper are devoted to its proof and further refinements. In the case of polygons $\Delta$, our approach leads to formulas in which the coefficients in the asymptotic expansion can be explicitly calculated, and we do so in Section~\ref{n2case}. In the Appendix, we recall some of Tate's notation and results and provide a comparison with ours.

\section{Main Result} \label{mainM}

Let $n\geq 1$ and
let $\mathbb{Z}^n$ be the integer lattice in $\mathbb{R}^n$. Let
$(\mathbb{Z}^n)^*$ and $(\mathbb{R}^n)^*$ be the corresponding
dual spaces. Let $\langle \cdot,\, \cdot \rangle$ denote the pairing
of $(\mathbb{R}^n)^*$ with $\mathbb{R}^n$. The subset $W \subset \mathbb{R}^n$ given by the inequalities 
\begin{eqnarray} \label{for:1}
\langle u_i, \, x \rangle \le c_i,\,\,\,\, i \in \{1, \ldots, m\}
\end{eqnarray}
is called an \emph{integer $m$\--wedge} if
for every $i \in \{1,\ldots,m\}$ the constant  $c_i$ is an integer
and the vector $u_i$ is a primitive lattice vector in $(\mathbb{R}^n)^*$.  Let $U$ be 
the subspace of $(\R^n)^*$ spanned by the $u_i$'s. We say
that $W$ is \emph{regular} if $\{u_1, \ldots,u_m\}$
is a lattice basis of the lattice $U \cap (\mathbb{Z}^n)^*$.

\begin{figure}
\begin{center}
\epsfig{file=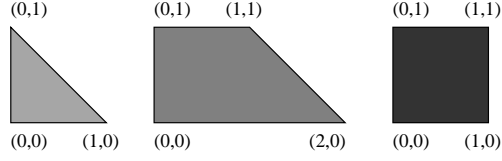}
\end{center}
\caption{Delzant polygons.}
\label{fig:polytopes}
\end{figure}

When $m=n$, define the diffeomorphism 
$\varphi \in {\rm C}^{\infty}(\mathbb{R}^n,\mathbb{R}^n)$, such that $\varphi(x) = y$ has 
coordinates $y_i := \langle u_i,x \rangle - c_i$, $ i \in \{1,\ldots,n\}$,  in the standard orthonormal basis of $\R^n$.
Let $\mathcal{H}_i$ be the facet  of $W$ defined by
$$\mathcal{H}_i := \left\{ x \in W \,\, | \,\, \langle u_i,x \rangle = c_i \right\}.$$
For $\alpha=(\alpha_i)_{1 \leq i \leq n} \in \N^n$, set $$F = \bigcap_{i, \alpha_i > 0} \mathcal{H}_i$$ and let the constant $K_{\alpha}(W)$ be defined as the Jacobian of the diffeomorphism
$\varphi_{|F}: F \to \bigcap_{i, \alpha_i > 0} \{y_i=0\}$,
and write
\[ \varint^{\star}_{F} := K_{\alpha}(W) \varint_{F}. \] 
Let $(v_i)_{1 \leq i \leq n}$ be the dual basis of $(u_i)_{1 \leq i \leq n}$.
The constant $K_{\alpha}(W)$ is the inverse of the volume of the parallelotope generated by the vectors $v_i$ for $i$ such that $\alpha_i = 0$, that is, the primitive outwards vectors defining the face $F$ (see Figure \ref{fig:kalpha}).

\begin{figure}
\begin{center}
\epsfig{file=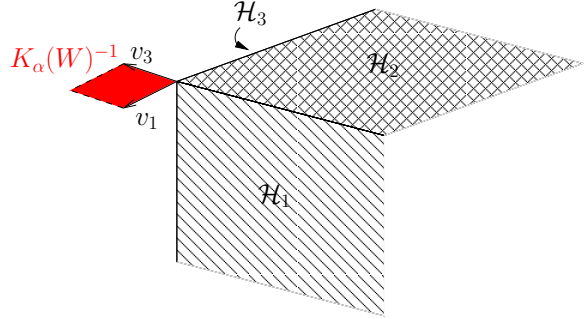,scale=0.6}
\end{center}
\caption{Computation of $K_{\alpha}(W)$; here $\alpha = (0,1,0)$.}
\label{fig:kalpha}
\end{figure}

We introduce the following notation:
\begin{itemize}
\item  $\alpha ! = \alpha_1 ! \ldots \alpha_n !$;
\item  $r(\alpha) \in \mathbb{N}^n$ is given by $r(\alpha)_i = 1$ if $\alpha_i > 0$, $r(\alpha)_i = 0$ if $\alpha_i = 0$;
\item if $u_1,\ldots,u_n \in \mathbb{R}^n$, $u_{\alpha}$ is the $|\alpha|$-tuple of vectors
\begin{equation*} u_{\alpha} = (\underbrace{u_1,\ldots,u_1}_{\alpha_1 \mathrm{\ times}},\ldots,\underbrace{u_n,\ldots,u_n}_{\alpha_n \mathrm{\ times}}); \end{equation*}
\item $\nu(\alpha)$ stands for the number of indices $i$ such that $\alpha_i > 0$.
\end{itemize}
We define $b_n$, $n \geq 0$ as follows: $b_0 = 1$, $b_1 = 1/2$, $b_{2p+1} = 0$ if $p \geq 1$ and $b_{2p} = (-1)^{p-1}B_p,$ with $B_p$ the $p$-th Bernoulli number.
Let
$$
C(W,\alpha):=\left(\frac{1}{\alpha !} \prod_{i=1}^n b_{\alpha_i}\right) K_{\alpha}(W). 
$$

A regular $n$\--wedge is an example of an $n$\--dimensional Delzant polytope.
 Let $\Delta \subset \R^n$ be an $n$\--dimensional  polytope. We say that $\Delta$ is 
 \emph{Delzant} if it is a simple and regular polytope\footnote{$\Delta$ is \emph{simple}
if there are exactly $n$ edges meeting at each vertex of $\Delta$; it is \emph{regular} if the
primitive vectors in the direction of the edges span a basis of $\Z^n$, i.e. 
for each vertex $v$ of $\Delta$,
the edges of $\Delta$ which intersect at $v$ lie on rays
$v +t\alpha_i$, $0 \le t < \infty$, where $\alpha$ is a lattice basis of
$\mathbb{Z}^n$.
}. Suppose that $\Delta$ has $d$ facets. Then
 $\Delta$ is defined by $d$ equations:
$
\langle u_i,\, x \rangle \le c_i,
$
where 
$i \in \{1,\ldots,d\}$. For $q
\in \llbracket 1,n \rrbracket$, we denote by $\mathcal{F}_{q}$ the set of
faces of codimension $q$ of $\Delta$. 

The following theorem gives asymptotic Euler\--MacLaurin formulas for Riemann sums. It holds in any dimension for wedges, and in dimensions $1$, $2$, and $3$ for polytopes.
The case of $4$\--dimensional polytopes is more complicated to handle with our techniques, and we leave it to future works (see Remark~\ref{xx}).  The first assertion of the theorem is the same as~\cite[Proposition~3.1]{Ta2010} (see the Appendix for a comparison), and the second one is similar 
to~\cite[Theorem~5.1]{Ta2010}. In fact, the uniqueness result~\cite[Theorem~5.3]{Ta2010} implies that the operators that we construct are the same, but what differs is the way we obtain them.

\begin{theorem} \label{keyT}
Let $f \in \op{C}^{\infty}_0(\mathbb{R}^n)$. Then the following hold.
\begin{itemize}
\item[(i)]
If $W$ if a regular $n$\--dimensional wedge in $\R^n$,
$$
\frac{1}{N^n} \sum_{k \in \mathbb{Z}^n \cap N W} f \Big( \frac{k}{N} \Big) \sim  \sum_{q \geq 0} N^{-q}\sum_{\alpha \in \mathbb{N}^n \atop |\alpha|=q} 
C(W,\alpha)
\varint_{ \bigcap_{1 \le i \leq n \atop \alpha_i > 0} \mathcal{H}_i } {\rm D}^{q - \nu(\alpha)} f \cdot v_{\alpha - r(\alpha)}, 
$$
where the integral is taken over $W$ if the intersection is empty, and the integral over a single point means evaluation at this point. The sign $\sim$ indicates equality modulo $\mathcal{O}(N^{-\infty})$.
\item[(ii)]
If $\Delta \subset \R^n$, $n \in \{1,2,3\}$, is an $n$\--dimensional Delzant polytope  with vertices in $\Z^n$,
for every $q \geq 1$ and every face $F \in \mathcal{F}_{m}$ with $m \leq q$, 
there exists a linear differential operator
$R_{q}(F,.)$ of degree $q-m$ depending only on $F$ and involving only
derivatives of $f$ in directions normal to $F$ such that
$$
\frac{1}{N^n} \sum_{k \in \mathbb{Z}^n \cap N\Delta} f \Big( \frac{k}{N}
\Big) \sim \varint_{\Delta} f + \sum_{q \geq 1} N^{-q}  \sum_{1 \leq m\leq q \atop F \in \mathcal{F}_{m}}
\varint_{F}^{\star} R_{q}(F,f).
$$
\end{itemize}
\end{theorem}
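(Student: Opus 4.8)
The plan is to prove (i) directly from the one\--dimensional Euler--MacLaurin formula and to deduce (ii) from (i) by a partition of unity adapted to the face structure of $\Delta$. For (i): since $W$ is a regular $n$\--wedge, the affine map $\varphi$ carries $W$ onto the negative orthant $Q=\{y:y_i\le0,\ i=1,\dots,n\}$ and, because the $u_i$ form a lattice basis of $(\Z^n)^*$ and the $c_i$ are integers, carries $\Z^n$ onto $\Z^n$ (and $NW$ onto $NQ$). Setting $g=f\circ\varphi^{-1}\in\op{C}^\infty_0(\R^n)$, the left\--hand side becomes $N^{-n}\sum_{\ell\in\Z^n\cap NQ}g(\ell/N)$, which is a product of $n$ one\--dimensional half\--line sums. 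I would apply to each factor the classical expansion $\frac1N\sum_{j\le0}h(j/N)\sim\sum_{p\ge0}\frac{b_p}{p!}N^{-p}h^{(p-1)}(0)$ for $h\in\op{C}^\infty_0(\R)$, where $h^{(-1)}(0):=\int_{-\infty}^0h$ and the $\mathcal{O}(N^{-\infty})$ remainder is supplied by the standard Bernoulli\--polynomial remainder estimate (see also \cite{GuSt2007}). The $n$\--fold product produces a sum over $\alpha\in\N^n$ in which coordinate $i$ contributes either $\partial_{y_i}^{\alpha_i-1}$ followed by evaluation at $y_i=0$ (when $\alpha_i>0$) or $\int_{-\infty}^0 dy_i$ (when $\alpha_i=0$); transporting back through $\varphi$, each $\partial_{y_i}$ becomes the directional derivative along $v_i$, each hyperplane $\{y_i=0\}$ becomes $\mathcal H_i$, the integration domain becomes $\bigcap_{\alpha_i>0}\mathcal H_i$, and the change\--of\--variables Jacobian on that face is exactly $K_\alpha(W)$. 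Absorbing $K_\alpha(W)$ into the constant $\prod_i b_{\alpha_i}/\alpha_i!$ yields $C(W,\alpha)$ and the stated formula; the same computation, with Poisson summation in place of Euler--MacLaurin in the remaining $n-m$ coordinate directions, gives the analogue for regular integer $m$\--wedges with $m<n$, which I use below.

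For (ii) I would fix a finite open cover $\{U_F\}$ of a neighbourhood of $\Delta$ in $\R^n$, of ``open\--star'' type, indexed by the faces $F$ of $\Delta$ (including $F=\Delta$), such that: (a) $U_\Delta$ meets $\Delta$ in an open subset of its interior; (b) for each proper face $F$ of codimension $m$, $U_F$ is a neighbourhood of the relative interior of $F$ in which, after an $\agl(n,\Z)$ change of coordinates using that the $m$ facet normals at $F$ extend to a lattice basis (Delzant condition), $\Delta$ coincides with a regular integer $m$\--wedge times an open set in the complementary directions; and (c) $U_F\cap U_{F'}\ne\varnothing$ only when one of $F,F'$ is a face of the other, so that for a subordinate smooth partition of unity $\{\rho_F\}$ one has $\sum_{F\subseteq G}\rho_F\equiv1$ on a full neighbourhood of the relative interior of every face $G$. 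Writing $f=\sum_F\rho_F f$ and applying part (i) --- in its $m$\--wedge form, and Poisson summation for $F=\Delta$ --- to each $N^{-n}\sum_{k\in\Z^n\cap N\Delta}(\rho_F f)(k/N)$, using that $\rho_F f$ is supported where $\Delta$ agrees with its wedge model, I obtain $\varint_\Delta\rho_F f$ plus, for every face $G\supseteq F$ and every $q\ge\operatorname{codim}G$, a term $N^{-q}\varint^\star_G$ of an explicit differential operator applied to $\rho_F f$ that differentiates only along the vectors dual (in the chosen basis at $F$) to the facet normals of $G$. Summing over $F$, the leading terms reassemble to $\varint_\Delta f$, and it remains to check that for each $G$ the total $N^{-q}$\--coefficient of $\varint^\star_G$ equals $\varint^\star_G R_q(G,f)$ for a single operator $R_q(G,\cdot)$ of degree $q-\operatorname{codim}G$ in directions normal to $G$.

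This last recombination is the main obstacle. The ``normal'' vectors produced by the localization at a face $F\subsetneq G$ differ from the ones intrinsic to $G$ by vectors tangent to $G$; expanding each iterated derivative accordingly, the purely normal contributions sum --- using $\sum_{F\subseteq G}\rho_F\equiv1$ near $G$ and the fact that a normal differential operator depends only on the normal jet along $G$ --- to a well\--defined operator $\varint^\star_G R_q(G,f)$, whereas the remaining contributions carry at least one tangential derivative and must be removed by integrating by parts along $G$. The ensuing boundary integrals over the facets of $G$ have to be matched, by an induction over decreasing dimension, with the contributions of the faces strictly contained in $G$ (here one uses that $\rho_F$ vanishes near any face not containing $F$), while the terms in which all tangential derivatives have been transferred onto the cutoffs vanish because $\sum_{F\subseteq G}\rho_F$ is locally constant near $G$. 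This telescoping is elementary but combinatorially delicate: it is immediate for $n=1$ (only the interior and the vertices occur) and remains tractable for polygons and for $3$\--polytopes, but the iterated integration\--by\--parts bookkeeping grows unwieldy once faces of codimension $\ge3$ interact, which is precisely why we restrict to $n\le3$ (compare Remark~\ref{xx}); the uniqueness statement \cite[Theorem~5.3]{Ta2010} then identifies the operators so produced with the canonical ones. Carrying the (at most three) reconciliation steps out explicitly for $n\le3$ also yields the closed\--form coefficients used in Section~\ref{n2case}.
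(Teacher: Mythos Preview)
Your argument for (i) is essentially the paper's: both reduce to the standard orthant via $\varphi$, expand the product of one\--variable Todd operators (equivalently, one\--dimensional Euler--MacLaurin sums), and then pull back, picking up the Jacobians $K_\alpha(W)$.

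For (ii) your route diverges from the paper's. You localize with an open\--star partition of unity indexed by \emph{all} faces, apply $m$\--wedge formulas for each codimension $m$, and then must reconcile, face by face, the different ``normal'' directions arising from the various charts; this is where your iterated integration by parts and induction over decreasing dimension come in. The paper instead localizes only at the \emph{vertices}: each $f_i$ is supported in a single regular $n$\--wedge $W_i$, so the partition\--of\--unity step reduces to the trivial identity $\sum_i R_q(F,f_i)=R_q(F,f)$ by linearity, and all the real work is pushed back into rewriting the single\--wedge expansion itself. There the paper uses exactly the same integration\--by\--parts move you describe (Lemma~\ref{lemma:int}) to trade tangential derivatives for integrals over smaller faces, but then proves separately (Lemmas~\ref{lemma:depend} and~\ref{lemma:dependmu}) that the resulting coefficients $\zeta^F_{ij}$ and $\mu^F_{ij}$ are intrinsic to the face and independent of the vertex chosen. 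Your scheme avoids this ``intrinsicness'' lemma at the price of a heavier recombination argument; the paper's scheme makes the global assembly trivial at the price of an algebraic lemma per step. Both hit the same wall at $n\ge4$ for the same reason: the products of $\mu$-- and $\zeta$--type coefficients (in the paper's language) that appear after two or more integrations by parts are no longer controlled by the available invariance statements, cf.\ Remark~\ref{xx}.
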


Theorem~\ref{keyT} will follow from combining several upcoming results: 
\begin{itemize}
\item
Proposition~\ref{keylemma};
\item
Theorem~\ref{theo:w}; 
\item 
Theorem~\ref{h2}.
\end{itemize}
Some of the results of the
paper are more general than Theorem~\ref{keyT}, but we leave them
to later sections for simplicity. Our proof of Theorem~\ref{keyT} is different from the proof of
Tate's general result in \cite[Theorem~5.1]{Ta2010}, self\--contained, and elementary 
(in the sense that it relies only on freshman  calculus). We expect to extend part (ii) of
Theorem~\ref{keyT} to higher dimensions in future works.

In the case where $\Delta$ is a polygon, we give concrete expressions for the coefficients in the formula in Theorem~\ref{keyT}, see Theorem~\ref{lemma:polydim2}.

\begin{figure}
\begin{center}
 \includegraphics[width=4cm]{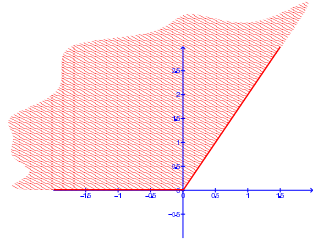}
\end{center}
 \caption{A $2$\--dimensional wedge.}
 \label{fig:wedge}
\end{figure}

\section{Guillemin\--Sternberg formulas for regular wedges and Delzant polytopes}

\subsection{Formula for regular wedges} 
The following approximation result was recently proven by 
Guillemin\--Sternberg \cite{GuSt2007}.

\begin{lemma}
Let $W$ be a regular integer $m$\--wedge defined by \textup{(\ref{for:1})}.
Let $W_h$ be the perturbed set defined by
$
\langle u_i,\, x \rangle \le c_i+h_i$, $i \in \{1,\ldots,m\}$,
where $h=(h_1,\ldots,h_m)$. Then, if $f \in \op{C}^{\infty}_0(\mathbb{R}^n)$,
we have that
$$
\frac{1}{N^n} \sum_{k \in \mathbb{Z}^n \cap N W} f \Big( \frac{k}{N} \Big) \sim
\Big( \tau\Big(   \frac{1}{N} \frac{\partial}{\partial h}\Big) \varint_{W_h}
f(x) \op{d}\!x\Big)(h=0),
$$
where $$\tau(s_1,\ldots,s_m)=\tau(s_1) \ldots \tau(s_m)$$ and $\tau(s_i)$
is the Todd function, on the variable $s_i$, for every $i \in \{1,\ldots, m\}$. The sign $\sim$ indicates equality modulo $\mathcal{O}(N^{-\infty})$.
\label{lm:wedgeGS}
\end{lemma}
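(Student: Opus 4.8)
The plan is to reduce the general regular $m$-wedge to the model case of the standard cone (or coordinate half-space), where the classical one-dimensional Euler--MacLaurin formula applies, and then to transport the result back via the lattice-preserving affine change of variables. First I would observe that since $\{u_1,\ldots,u_m\}$ is a lattice basis of $U\cap(\Z^n)^*$ by regularity, one may complete it to a lattice basis $\{u_1,\ldots,u_n\}$ of $(\Z^n)^*$; the associated linear map $x\mapsto(\langle u_1,x\rangle,\ldots,\langle u_n,x\rangle)$ is then an element of $\agl(n,\Z)$ and carries $\Z^n$ bijectively onto $\Z^n$ and $W$ onto the product $\{y\in\R^n : y_i\le c_i,\ i\le m\}$ of a coordinate orthant in the first $m$ variables with all of $\R^{n-m}$ in the remaining ones. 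Because this map is unimodular, the Riemann sum $N^{-n}\sum_{k\in\Z^n\cap NW}f(k/N)$ and the integral $\varint_{W_h}f$ are both invariant (up to pulling back $f$), so it suffices to prove the statement in these model coordinates.

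Next I would handle the $m$-fold sum by Fubini, treating each of the first $m$ coordinates separately and leaving the remaining $n-m$ coordinates (over which we simply integrate) as a parameter. For a single half-line $\{y_1\le c_1\}$ with $c_1\in\Z$, the claim is the one-variable statement that
\[
\frac{1}{N}\sum_{k\le Nc_1}g\Big(\frac{k}{N}\Big)\sim\Big(\ta\Big(\frac1N\frac{\partial}{\partial h_1}\Big)\varint_{-\infty}^{c_1+h_1}g(y)\,\DD y\Big)(h_1=0),
\]
valid for $g\in\op{C}^\infty_0(\R)$. This is exactly the asymptotic Euler--MacLaurin formula: writing $\ta(s)=s/(1-e^{-s})=\sum_{j\ge0}\frac{b_j}{j!}s^j$, the right-hand side expands as $\sum_{j\ge0}\frac{b_j}{j!}N^{-j}G^{(j-1)}(c_1)$ where $G$ is an antiderivative of $g$, i.e. $\varint^{c_1}_{-\infty}g+\sum_{j\ge1}\frac{b_j}{j!}N^{-j}g^{(j-1)}(c_1)$, which is the classical Euler--MacLaurin summation formula for a compactly supported function, the remainder being $\mathcal{O}(N^{-\infty})$ since all boundary terms at $-\infty$ vanish. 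Applying this in each of the $m$ variables and using $\ta(s_1,\ldots,s_m)=\ta(s_1)\cdots\ta(s_m)$ together with the fact that the operators $\partial/\partial h_i$ commute, one multiplies the $m$ one-dimensional identities and integrates over the free $\R^{n-m}$ directions to obtain the claim in model coordinates.

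The main obstacle is not any single computation but the bookkeeping needed to make the reduction rigorous: one must check that the $\agl(n,\Z)$ change of variables genuinely intertwines the perturbed sets $W_h$ with the model perturbed orthants (so that the differential operator $\ta(N^{-1}\partial/\partial h)$ is unchanged), and that the compact-support hypothesis on $f$ is preserved and justifies both the interchange of sum and integral in Fubini and the vanishing of all Euler--MacLaurin remainders to order $\mathcal{O}(N^{-\infty})$, uniformly in the parameters coming from the other coordinates. I would also note that this lemma is quoted from Guillemin--Sternberg~\cite{GuSt2007}, so in the paper it would suffice to cite their proof; the sketch above is the natural self-contained argument if one wishes to reproduce it.
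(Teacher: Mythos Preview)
The paper does not give its own proof of this lemma: it is stated as a result of Guillemin--Sternberg and simply cited from~\cite{GuSt2007}. You correctly anticipate this at the end of your proposal, and the self-contained sketch you offer---reduce to the standard orthant by a unimodular change of variables, then apply the one-variable Euler--MacLaurin formula coordinate by coordinate via Fubini and the product structure $\tau(s_1,\ldots,s_m)=\tau(s_1)\cdots\tau(s_m)$---is exactly the argument in~\cite{GuSt2007}, so your proposal is both correct and faithful to the intended source.
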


\subsection{Formula for Delzant (i.e. regular and  simple) polytopes}
Now let $\Delta \subset \mathbb{R}^n$ be an $n$\--dimensional Delzant polytope 
with vertices in $\mathbb{Z}^n$ and 
exactly $d$ facets. Then $\Delta$ is defined by the inequalities
$
\langle u^i,\, x \rangle \le c_i,\,\,\,\, i \in \{1,\ldots,d\}
$
where $c_i$ is an integer and $u^i \in (\mathbb{Z}^n)^*$ is a
primitive vector perpendicular to the $i^{\op{th}}$\--facet of
$\Delta$, and pointing outwards from $\Delta$. 

Because $\Delta$
is simple by assumption, every codimension $k$ face of $\Delta$
is defined by a collection of equalities
$
\langle u^i,\, x \rangle = c_i$, $i \in F$,
where $F$ is a subset of $k$ elements of the set $\{1,\ldots,d\}$. 
Let $W_F$ denote the $k$\--wedge 
$
\langle u^i,\, x \rangle \le c_i,\,\,\,\, i \in F.
$
Because $\Delta$ is regular, each $k$\--wedge  $W_F$ is regular. 

Guillemin and Sternberg have recently shown \cite{GuSt2007} the following Euler\--MacLaurin formula.

\begin{theorem} \label{regsim:GS}
Let $\Delta$ be a Delzant polytope with vertices in $\Z^n$. Let $\Delta_h$ be the perturbed
polytope defined by the equations 
$
\langle u^i,\, x \rangle \le c_i+h_i$, $i \in \{1,\ldots,d\}$.
Then, if $f \in \op{C}^{\infty}_0(\mathbb{R}^n)$, we have
$$
\frac{1}{N^n}\sum_{k \in \mathbb{Z}^n \cap N \Delta} f\Big( \frac{k}{N}\Big) \sim
\Big( \tau\Big(   \frac{1}{N} \frac{\partial}{\partial h}\Big) \varint_{\Delta_h}
f(x) \op{d}\!x\Big)(h=0),
$$
where 
$$
\tau(s_1,\ldots,s_d)=\tau(s_1) \ldots \tau(s_d)
$$
and $\tau(s_i)$
is the Todd function on the variable $s_i$, for every $i \in \{1,\ldots, d\}$.
\end{theorem}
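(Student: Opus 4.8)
The plan is to reduce the Delzant-polytope formula to the wedge formula of Lemma~\ref{lm:wedgeGS} by a localization/partition-of-unity argument near the faces of $\Delta$, combined with an inclusion-exclusion identity that relates the Todd operator applied to the $d$-variable perturbation $\Delta_h$ to the wedge contributions. First I would fix a Delzant polytope $\Delta$ with facet data $(u^i, c_i)_{i=1}^d$, and observe that the Riemann sum $\frac{1}{N^n}\sum_{k\in\Z^n\cap N\Delta} f(k/N)$ differs from $\varint_\Delta f$ only through boundary effects. The idea is to choose a smooth partition of unity $\{\chi_v\}$ subordinate to a cover of $\Delta$ by neighborhoods of its vertices (together with one interior piece), so that on the support of $\chi_v$ the polytope $\Delta$ agrees with the regular $n$-wedge $W_v$ cut out by the $n$ facets through $v$. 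Since $f$ is compactly supported and smooth, $f\chi_v$ is again in $\op{C}^\infty_0(\R^n)$, and the sum splits (modulo $\mathcal{O}(N^{-\infty})$) as a sum over these pieces.

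Next I would apply Lemma~\ref{lm:wedgeGS} to each vertex piece: for the wedge $W_v$ with its $n$ defining facets we get
$$\frac{1}{N^n}\sum_{k\in\Z^n\cap NW_v}(f\chi_v)\Big(\frac{k}{N}\Big)\sim\Big(\tau\Big(\frac1N\frac{\partial}{\partial h}\Big)\varint_{(W_v)_h}(f\chi_v)(x)\,\op{d}\!x\Big)(h=0),$$
where the Todd operator now involves only the $n$ variables $h_i$ indexed by facets through $v$. The key combinatorial step is then to reassemble these local identities into the global one: using that $\tau(s)=1+\frac{s}{2}+\dots$ has constant term $1$, and that the $n$-variable Todd operator at a vertex is the restriction of the full $d$-variable operator $\tau(\frac1N\frac{\partial}{\partial h})$ (the missing factors act as the identity on $\varint_{\Delta_h} f\chi_v$ because, near $v$, perturbing a facet not passing through $v$ does not move the relevant part of the boundary, so the $h_j$-derivative of $\varint_{\Delta_h} f\chi_v$ vanishes at $h=0$ for $j\notin v$). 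Summing over $v$ and using $\sum_v\chi_v\equiv 1$ on a neighborhood of $\Delta$ collapses the pieces back to $\varint_{\Delta_h} f(x)\,\op{d}\!x$, yielding the stated formula. Alternatively, one may invoke the additivity of both sides as functionals of $\Delta$ under subdivision and the exactness of Euler-MacLaurin data on simplices, but the partition-of-unity route is the most transparent.

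The main obstacle I anticipate is the careful bookkeeping in the reassembly step: one must verify that the cross-terms between different vertex neighborhoods, and the boundary contributions of the cutoffs $\chi_v$ themselves, contribute only $\mathcal{O}(N^{-\infty})$, and that the restriction of the $d$-variable Todd operator to each vertex neighborhood genuinely acts as the $n$-variable operator there. This hinges on the fact that near a vertex $v$ of a \emph{simple} polytope exactly $n$ facets meet, so the perturbations $h_j$ for $j\notin v$ are inert to first order, and on a Poisson-summation-type estimate showing that smooth cutoffs away from the boundary of $\Delta$ kill all polynomial orders in $1/N$. Once these localization estimates are in place, the algebra of combining the Todd series is routine.
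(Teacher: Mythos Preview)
The paper does not actually prove Theorem~\ref{regsim:GS}: it is quoted as a result of Guillemin and Sternberg~\cite{GuSt2007} and used as input, with no argument given here. So there is no ``paper's own proof'' to compare against.

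That said, your proposal is the standard and correct route to this result, and it is in fact the same localization idea the present paper uses later in the proof of Theorem~\ref{h2} (partition of unity subordinate to neighbourhoods of the vertices, reduce to the wedge case, then reassemble by linearity). Your reassembly step is also right: for a facet index $j$ not incident to the vertex $v$, the support of $f\chi_v$ stays away from the $j$-th facet, so $\varint_{\Delta_h} f\chi_v$ is locally independent of $h_j$ and the extra Todd factors $\tau\big(\tfrac{1}{N}\tfrac{\partial}{\partial h_j}\big)$ act as the identity; thus the $d$-variable operator restricts to the $n$-variable one on each vertex piece. One small cosmetic point: you do not need a separate ``interior piece'' in the cover, since for a simple polytope the vertex neighbourhoods can be chosen to cover all of $\Delta$; if you do include one, the corresponding Riemann sum equals the integral modulo $\mathcal{O}(N^{-\infty})$ by the usual Poisson-summation argument for smooth functions supported away from the boundary, exactly as you note.
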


\begin{remark}
A general asymptotic Euler\--MacLaurin
formula for Riemann sums over lattice polytopes (simple or not) was given by Tate \cite{Ta2010}. As far as we know,
Theorem \ref{regsim:GS} does not follow from Tate's formula.  Other
Euler\--MacLaurin formulas have been obtained by Berline-Vergne 
\cite{BeVe2007}, Brion\--Vergne \cite{BrVe1997}, Cappell\--Shaneson \cite{CS}, 
Karshon\--Sternberg\--Weitsman \cite{KSW}, and Zelditch \cite{Ze2009} among other authors.
\end{remark}

\section{Asymptotic expansion for regular wedges}

Let $W$ be a regular integer $n$\--wedge defined by \textup{(\ref{for:1})}. Recall that $(v_i)_{1 \leq i \leq n}$ is 
the dual basis of $(u_i)_{1 \leq i \leq n}$ and $\mathcal{H}_i$ is the facet of $W$ defined by 
\begin{equation*} \mathcal{H}_i = \left\{ x \in W\,\, | \,\, \langle u_i,x \rangle = c_i \right\}. \end{equation*}
Then $v_{i}$ generates the edge $\bigcap_{j\neq i}  \mathcal{H}_j$. For any integer $q$, we introduce the operator $T_q(W,.)$ defined by
\begin{equation*} T_q(W,f) = \sum_{\alpha \in \mathbb{N}^n \atop |\alpha|=q} \left(\frac{1}{\alpha !} \prod_{i=1}^n b_{\alpha_i}\right) K_{\alpha}(W) \varint_{ \bigcap_{i, \alpha_i > 0} \mathcal{H}_i } {\rm D}^{q - \nu(\alpha)} f \cdot v_{\alpha - r(\alpha)} \end{equation*}
with the convention that the integral is taken over $W$ if the intersection is empty, and that integrating a function over the vertex means evaluating it at the vertex. $K_{\alpha}(W)$ is a constant depending only on the face $\bigcap_{i, \alpha_i > 0} \mathcal{H}_i $, of which we gave an interpretation earlier. 

\begin{prop} \label{keylemma}
If $f \in \op{C}^{\infty}_0(\mathbb{R}^n)$, we have that
\begin{equation} \frac{1}{N^n} \sum_{k \in \mathbb{Z}^n \cap N W} f \Big( \frac{k}{N} \Big) \sim \sum_{q \geq 0} N^{-q} T_{q}(f). \label{eq:expansionwedge} \end{equation}
\end{prop}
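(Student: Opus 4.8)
The plan is to start from the Guillemin--Sternberg formula for regular wedges, Lemma~\ref{lm:wedgeGS}, which expresses the Riemann sum asymptotically as the Todd operator $\tau\bigl(\tfrac{1}{N}\partial/\partial h\bigr)$ applied to $\varint_{W_h} f$ and then evaluated at $h=0$. The core of the argument is to expand this expression, collect terms by powers of $N^{-1}$, and recognize the resulting coefficients as the operators $T_q(W,\cdot)$. First I would recall that the Todd function has Taylor expansion $\tau(s) = \sum_{j\geq 0} \tfrac{b_j}{j!} s^j$ (this is exactly the choice of constants $b_j$ made in Section~\ref{mainM}: $b_0=1$, $b_1=1/2$, $b_{2p+1}=0$, $b_{2p}=(-1)^{p-1}B_p$), so that $\tau(s_1)\cdots\tau(s_n) = \sum_{\alpha\in\N^n} \bigl(\prod_i \tfrac{b_{\alpha_i}}{\alpha_i!}\bigr) s^\alpha$. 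Substituting $s_i = \tfrac1N \partial/\partial h_i$, the right-hand side of Lemma~\ref{lm:wedgeGS} becomes $\sum_{\alpha} N^{-|\alpha|} \bigl(\tfrac{1}{\alpha!}\prod_i b_{\alpha_i}\bigr) \bigl(\partial^\alpha_h \varint_{W_h} f\bigr)\big|_{h=0}$, which already sorts itself into powers $N^{-q}$ with $q=|\alpha|$.

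The key computational step is therefore to evaluate the mixed partial derivative $\partial^\alpha_h \varint_{W_h} f(x)\,\D x \big|_{h=0}$ and identify it with $K_\alpha(W)\varint_{\bigcap_{i,\alpha_i>0}\mathcal H_i} D^{q-\nu(\alpha)} f\cdot v_{\alpha - r(\alpha)}$. To do this I would use the change of variables $\varphi$ (defined when $m=n$) sending $x$ to $y$ with $y_i = \langle u_i, x\rangle - c_i$; in the $y$-coordinates the wedge $W_h$ becomes the orthant $\{y_i \leq h_i\}$ and $\varint_{W_h} f = |\det(v_1,\ldots,v_n)| \varint_{y_i\leq h_i} (f\circ\varphi^{-1})(y)\,\D y$. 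Differentiating an integral over $\{y_i\leq h_i\}$ with respect to $h_i$ once brings out a boundary term (the fundamental theorem of calculus in the $i$-th variable) setting $y_i=h_i$; differentiating $\alpha_i$ times likewise lowers the domain of integration in the $y_i$ direction to the hyperplane $y_i=h_i$ and brings down $\alpha_i - 1$ further derivatives $\partial_{y_i}^{\alpha_i-1}$. Taking $h=0$, the integral collapses onto $\bigcap_{i,\alpha_i>0}\{y_i=0\}$ and the derivatives $\partial_{y_i}$ translate back, via the chain rule, into directional derivatives of $f$ along the dual vectors $v_i$ — the total order being $\sum_i(\alpha_i-1)_+ = q - \nu(\alpha)$, and the vector multi-index being exactly $v_{\alpha-r(\alpha)}$. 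The Jacobian factors combine to give precisely $K_\alpha(W)$ after accounting for the $\nu(\alpha)$-codimensional integration: this is where the interpretation of $K_\alpha(W)$ as the inverse volume of the parallelotope spanned by the $v_i$ with $\alpha_i=0$ enters, and care is needed to track exactly which Jacobian survives the repeated boundary evaluations.

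Assembling these pieces: the $N^{-q}$ coefficient is $\sum_{|\alpha|=q}\bigl(\tfrac{1}{\alpha!}\prod_i b_{\alpha_i}\bigr) K_\alpha(W) \varint_{\bigcap_{i,\alpha_i>0}\mathcal H_i} D^{q-\nu(\alpha)} f\cdot v_{\alpha-r(\alpha)}$, which is exactly $T_q(W,f)$ by definition. The conventions (integral over $W$ when $\alpha=0$, evaluation at a vertex when $\nu(\alpha)=n$) match automatically because the corresponding intersection of hyperplanes is either all of $W$ or a single point. I would also note that since $f$ is compactly supported all the boundary terms at infinity vanish, so there is no issue with convergence or with the manipulations of improper integrals.

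The main obstacle I expect is the bookkeeping in the differentiation-under-the-integral-sign step — specifically, making precise how repeated differentiation in $h_i$ of an integral over the moving domain $\{y_i\leq h_i\}$ produces both a drop in the integration domain and residual interior derivatives, and then correctly identifying the surviving Jacobian with $K_\alpha(W)$. A clean way to handle it is probably to treat one coordinate direction at a time (Fubini), establishing a one-variable lemma of the form $\tfrac{d^k}{dh^k}\int_{-\infty}^{h} g(y)\,dy\big|_{h=0} = g^{(k-1)}(0)$ for $k\geq 1$, and then iterating; the vector-valued chain rule converting $\partial_{y_i}$ to $v_i\cdot D$ is routine but must be stated carefully to get the multi-index $v_{\alpha-r(\alpha)}$ and the order $q-\nu(\alpha)$ right. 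Everything else is formal manipulation of the Todd series.
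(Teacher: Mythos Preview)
Your proposal is correct and follows essentially the same approach as the paper: expand the Todd product, reduce via the affine change of variables $\varphi$ to the standard orthant $\{y_i\le h_i\}$, apply the one-variable identity $\tfrac{d^k}{dh^k}\int_{-\infty}^h g = g^{(k-1)}(h)$ coordinatewise, and convert $\partial_{y_i}$ back to directional derivatives along $v_i$ with Jacobian $K_\alpha(W)$. The only cosmetic difference is that the paper first transforms the Riemann sum itself (using that $\varphi$ is a lattice isomorphism since $W$ is regular and the $c_i$ are integers) and then applies the Guillemin--Sternberg lemma to the standard wedge, whereas you apply the lemma to $W$ directly and change variables inside $\varint_{W_h} f$; since $|\det(v_1,\ldots,v_n)|=1$ for a regular wedge these two orderings are interchangeable.
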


\begin{proof}

The idea of the proof is the following: first, we compute the full asymptotic expansion given by Lemma \ref{lm:wedgeGS} in the case of the standard $n$-wedge $\left\{ x \in \mathbb{R}^n; x_1 \leq 0, \ldots, x_n \leq 0 \right\}$. Then, we perform a change of variables to deal with the case of a general regular $n$-wedge.

We start by writing the expansion of $\tau(s_1,\ldots,s_n) = \tau(s_1) \ldots \tau(s_n)$; we recall that 
\begin{equation*} \tau(s) = \frac{s}{1-\exp(-s)} = \sum_{n=0}^{+\infty} b_n \frac{s^n}{n!}. \end{equation*} 
Write
\begin{equation*} \tau(s_1,\ldots,s_n) = \sum_{\alpha \in \mathbb{N}^n} a_{\alpha} s^{\alpha},  \end{equation*}
where $s^{\alpha} = s_1^{\alpha_1} \ldots s_n^{\alpha_n}$. The coefficient $S_q$ of $N^{-q}$ in $\tau\left(\frac{\partial}{\partial h}\right)$ is equal to
\begin{equation*} \sum_{\alpha \in \mathbb{N}^n, |\alpha|=q} a_{\alpha} \frac{\partial^{\alpha}}{\partial h^{\alpha}} \end{equation*} 
with $$\frac{\partial^{\alpha}}{\partial h^{\alpha}} = \frac{\partial^{\alpha_1}}{\partial h_1^{\alpha_1}} \ldots \frac{\partial^{\alpha_n}f}{\partial h_n^{\alpha_n}}.$$
Hence
\begin{equation*} S_q(f) = \sum_{\alpha \in \mathbb{N}^n, |\alpha|=q} \left(\frac{1}{\alpha !} \prod_{i=1}^n b_{\alpha_i}\right) \frac{\partial^{\alpha}}{\partial h^{\alpha}}. \end{equation*}

From this result, we deduce a formula for the case of the standard wedge. Remember that
\begin{equation*} \varint_{W_h} f(x) {\rm d}x = \varint_{-\infty}^{h_1} \ldots \varint_{-\infty}^{h_n} f(x_1,\ldots,x_n) {\rm d}x_1 \ldots {\rm d}x_n; \end{equation*}
thus
\begin{equation*} \left(\frac{\partial}{\partial h_i} \varint_{W_h} f(x) {\rm d}x \right)_{|h=0} = \varint_{-\infty}^{0} \ldots \varint_{-\infty}^{0} f(\widehat{x_i}) \widehat{{\rm d}x_{i}} = \varint_{\{x_i = 0\}} f \end{equation*}
where $\widehat{x_i} = (x_1,\ldots x_{i-1},0,x_{i+1},\ldots,x_n)$ and $\widehat{{\rm d}x_{i}} = {\rm d}x_{1} \ldots {\rm d}x_{i-1} {\rm d}x_{i+1} \ldots {\rm d}x_{n}$. From this we obtain a general formula when $\alpha_i \geq 1$:
\begin{equation*} \left(\frac{\partial^{\alpha_i}}{\partial h_i^{\alpha_i}} \varint_{W_h} f(x) {\rm d}x \right)_{|h=0} = \varint_{\{x_i = 0\}} \frac{\partial^{\alpha_i-1}f}{\partial x_i^{\alpha_i-1}}. \end{equation*}
This finally yields
\begin{equation} \frac{\partial^{\alpha}}{\partial h^{\alpha}} \varint_{W_h} f(x) {\rm d}x = \varint_{\bigcap_{i, \alpha_i > 0} \{x_i =0\}} \frac{\partial^{\alpha-r(\alpha)}f}{\partial x^{\alpha-r(\alpha)}}.  \label{eq:standard}\end{equation}
This gives the desired formula in the case of the standard wedge.

Let us now turn to the general case. Let $W$ be the regular $n$-wedge defined by (\ref{for:1}). Define the diffeomorphism $\varphi: \mathbb{R}^n \to \mathbb{R}^n$, such that $\varphi(x) = y$ has the following coordinates in the standard orthonormal basis of $\R^n$:
\begin{equation*} \forall i \in \{1,\ldots,n\} \quad y_i = \langle u_i,x \rangle - c_i .\end{equation*}
Then $\varphi(W)$ is the standard wedge; moreover, $\varphi$ is a diffeomorphism from $\bigcap_{i \in I} \mathcal{H}_i$ to $\bigcap_{i \in I} \{x_i = 0\}$ for each subset $I$ of $\llbracket 1,n \rrbracket$. We have that 
\begin{equation*} \sum_{k \in \mathbb{Z}^n \cap NW} f\left(\frac{k}{N}\right) = \sum_{\ell \in \varphi(\mathbb{Z}^n) \cap N\varphi(W)} g\left(\frac{\ell}{N}\right) \overset{c_i \ \text{integers}}{=} \sum_{\ell \in \mathbb{Z}^n \cap N\varphi(W)} g\left(\frac{\ell}{N}\right)  \end{equation*}  
with $g = f \circ \varphi^{-1}$. But we know from the previous case that 
\begin{equation*} \sum_{\ell \in \mathbb{Z}^n \cap N\varphi(W)} g\left(\frac{\ell}{N}\right) \sim \sum_{q \geq 0} N^{-q} \ T_q(\varphi(W),g); \end{equation*}
therefore, it only remains to prove that for every $q \geq 0$, $$T_q(\varphi(W),g) = T_q(W,f).$$ First, we have to express the quantity $\frac{\partial^m g}{\partial y_i^m}$ in terms of $f$. Since $g= f \circ \varphi^{-1}$, we have
\begin{equation*} \frac{\partial g}{\partial y_i}(y) = {\rm D} f(\varphi^{-1}(y)) \cdot {\rm D}_{y_i}(\varphi^{-1})(y) ;  \end{equation*}
but $$\varphi^{-1}(y) = \sum_{j=1}^n (y_j + c_j)v_j,$$ so ${\rm D}_{y_i}(\varphi^{-1})(y) = v_i$. It follows that 
\begin{equation*} \frac{\partial g}{\partial y_i}(y) = {\rm D}f(\varphi^{-1}(y)) \cdot v_i. \end{equation*}
By induction, we find
\begin{equation*} \frac{\partial^m g}{\partial y_i^m}(y) = {\rm D}^mf(\varphi^{-1}(y)) \cdot (v_i,\ldots,v_i). \end{equation*}
Now, we have to understand integrals of the form
\begin{equation*} I = \varint_{\bigcap_{i, \alpha_i > 0} \{y_i =0\}} \frac{\partial^{\alpha-r(\alpha)}g}{\partial y^{\alpha-r(\alpha)}}. \end{equation*}
From the previous discussion
\begin{equation*} I = \varint_{\varphi\left( \bigcap_{i, \alpha_i > 0} \mathcal{H}_i \right)} {\rm D}^{|\alpha - r(\alpha)|} f(\varphi^{-1}(y)) \cdot v_{\alpha - r(\alpha)}. \end{equation*}
Hence
\begin{equation*} I = K_{\alpha}(W) \varint_{ \bigcap_{i, \alpha_i > 0} \mathcal{H}_i } {\rm D}^{q - \nu(\alpha)} f \cdot v_{\alpha - r(\alpha)} \end{equation*}
where $K_{\alpha}(W)$ is the Jacobian of the diffeomorphism $\varphi_{|\bigcap_{i, \alpha_i > 0} \mathcal{H}_i}$, which was to be proved.
\end{proof}

Another way to write this result is the following: we can eliminate the constants $K_{\alpha}(W)$ by normalizing the measure on the face $F = \bigcap_{i, \alpha_i > 0} \mathcal{H}_i$; we set 
$$\varint_{F}^{\star} = K_{\alpha}(W) \varint_{F}.$$

\section{Explicit asymptotic expansion for regular wedges}
\label{subsection:expwedge}

In order to deduce a formula for polygons and $3$\--dimensional polytopes, we need to rewrite the asymptotic expansion in
Proposition~\ref{keylemma} in a suitable form. Some of the results of this section apply for general $n\geq 1$,
and when this is the case we present the most general version of the result. The final results require that we restrict to $n=1,2$, or $3$.

\subsection{General results}

Recall that for $d \in \llbracket 1,n \rrbracket$, 
$\mathcal{F}_{d}$ denotes the set of faces of codimension $d$ of $W$. In order to simplify the notation, we set
\begin{equation*} \lambda_{\alpha} = \frac{1}{\alpha !} \prod_{i=1}^n b_{\alpha_i} \end{equation*}
for $\alpha \in \mathbb{N}^n$. Our objective is to write a formula of the following kind:
\begin{equation*} T_{q}(W,f) = \sum_{d=0}^{n-1} \sum_{F \in \mathcal{F}_{d}} S_{q}(F,f)  \end{equation*}
where $S_{q}(F,f)$ is a differential operator associated to the face $F$, with good properties in a sense that we will precise later. Let $F \in \mathcal{F}_{d}$. There exists a subset $I = \{i_{1}, \ldots, i_{d}\}$ of $\llbracket 1,n \rrbracket$ such that $F = \bigcap_{j \in I} \mathcal{H}_{j}$; the family $(v_{i})_{i \notin I}$ is a basis of the linear subspace spanned by $F$. A first expression for $S_{q}(F,f)$ is 
\begin{equation*} S_{q}(F,f) = \sum_{\substack{\alpha \in \mathbb{N}^n,|\alpha|=q\\ \{i, \ \alpha_{i} > 0\} = I}} \lambda_{\alpha}  \varint^{\star}_{F} {\rm D}^{q-d} f \cdot v_{\alpha - r(\alpha)}. \end{equation*}
Observe that if $d > q$, then $S_{q}(F,f) = 0$. If $d=q$, then $S_{q}(F,f)$ involves the integral 
$$\varint^{\star}_{F} f.$$ When $d < q$, the situation is a little bit more complicated, because we integrate directional derivatives of $f$ involving the vectors $v_{i}, i \in I$. But we would like to keep only quantities that depend on the face $F$ and nothing else; this is why we decompose the vectors $v_{i}, i \in I$ as follows:
\begin{equation} v_i = \sum_{j \notin I} \mu_{ij}^F v_j + \sum_{j \in I} \zeta_{ij}^F n_j \label{eq:vi}\end{equation}
where $n_j$ is the outward primitive normal to the facet $\mathcal{H}_j$. Next, we expand the quantity ${\rm D}^{q-d} f \cdot v_{\alpha - r(\alpha)}$ as a linear combination of $n^{q-d}$ terms involving the vectors $v_{j}, j \notin I$ and $n_{j}, j \in I$.

More precisely, write equation (\ref{eq:vi}) as 
\begin{equation*} v_{i} = \sum_{j=1}^n \lambda_{ij}^{F} w_{j} \end{equation*}
where $\lambda_{ij}^F = \mu_{ij}^F$, $w_{j} = v_{j}$ if $j \notin I$ and $\lambda_{ij}^F = \zeta_{ij}^F$, $w_{j} = n_{j}$ if $j \in I$. Moreover, set $\beta = \alpha - r(\alpha)$ and define integers $k_{\ell}$, $1 \leq \ell \leq q-d$ as follows: $k_{\ell} = i_{1}$ if $1 \leq \ell \leq \beta_{i_{1}}$, \ldots, $k_{\ell} = i_{d}$ if $q-d-\beta_{i_{d}} \leq \ell \leq q-d$. Then 
\begin{equation*} {\rm D}^{q-d} f \cdot v_{\alpha - r(\alpha)} = {\rm D}^{q-d} f \cdot \left( \sum_{j_{1}=1}^n \lambda_{k_{1}j_{1}}^F w_{j_{1}}, \ldots,  \sum_{j_{q-d}=1}^n \lambda_{k_{q-d}j_{q-d}}^F w_{j_{q-d}} \right) \end{equation*}
which can be written by multilinearity as 
\begin{equation*} {\rm D}^{q-d} f \cdot v_{\alpha - r(\alpha)} = \sum_{j_{1},\ldots,j_{q-d}=1}^n \left(\prod_{\ell = 1}^{q-d} \lambda_{k_{\ell}j_{\ell}}^F\right) {\rm D}^{q-d} f \cdot \left(  w_{j_{1}}, \ldots,  w_{j_{q-d}} \right). \end{equation*}
We now want to get rid of the vectors $v_{j}$, $j \notin I$ when they appear in the quantity 
$${\rm D}^{q-d} f \cdot \left( w_{j_{1}}, \ldots,   w_{j_{q-d}} \right).$$ If $j_{1}, \ldots, j_{q-d}$ all belong to $I$, then only the normal vectors $n_{j}$ appear, and we have nothing to do; this constitutes $d^{q-d}$ favorable cases. In unfavorable cases, we use the following lemma.
\begin{lemma}
\label{lemma:int}
For any function $g \in \op{C}^{\infty}_0(\mathbb{R}^n)$ and for every $j \notin I$, we have
\begin{equation*} \varint^{\star}_{F} {\rm D}g \cdot v_{j} = \varint^{\star}_{F_{j}} g \end{equation*}
where $F_{j}$ is the face of codimension $d + 1$ defined by $F_{j} = F \cap \mathcal{H}_{j}$.
\end{lemma}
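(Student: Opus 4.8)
The plan is to reduce the identity to the case of the standard wedge by means of the diffeomorphism $\varphi$ already used in the proof of Proposition~\ref{keylemma}: under $\varphi$ the normalized integral $\varint^{\star}$ becomes ordinary Lebesgue integration over a coordinate subspace, and the claimed equality collapses to a one-variable integration by parts.

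First I would record the effect of $\varphi$. Writing $\varphi(x)=y$ with $y_i=\langle u_i,x\rangle-c_i$, we know that $\varphi$ maps $W$ onto the standard wedge $\{y\in\mathbb{R}^n : y_i\le 0,\ 1\le i\le n\}$, the face $F=\bigcap_{i\in I}\mathcal{H}_i$ (with $I=\{i_1,\ldots,i_d\}$) onto $\{y : y_i=0,\ i\in I\}$, and $F_j=F\cap\mathcal{H}_j$ onto $\{y : y_i=0,\ i\in I\cup\{j\}\}$. Since $K_{\alpha}(W)$ is by definition the Jacobian of $\varphi_{|F}$ (for any $\alpha$ with $\{i,\ \alpha_i>0\}=I$), the change of variables formula gives, for every $\psi\in\op{C}^{\infty}_0(\mathbb{R}^n)$,
$$ \varint^{\star}_{F}\psi \;=\; \varint_{\{y_i=0,\, i\in I\}} (\psi\circ\varphi^{-1})(y)\,\prod_{k\notin I}\mathrm{d}y_k, $$
together with the analogous identity with $F$ and $I$ replaced by $F_j$ and $I\cup\{j\}$.

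Next, set $\widetilde{g}=g\circ\varphi^{-1}$. Exactly as in the proof of Proposition~\ref{keylemma}, from $\varphi^{-1}(y)=\sum_{\ell}(y_\ell+c_\ell)v_\ell$ one gets $\dfrac{\partial\widetilde{g}}{\partial y_j}(y)={\rm D}g(\varphi^{-1}(y))\cdot v_j$, so applying the displayed identity above to $\psi={\rm D}g\cdot v_j$ yields
$$ \varint^{\star}_{F}{\rm D}g\cdot v_j \;=\; \varint_{\{y_i=0,\, i\in I\}} \frac{\partial\widetilde{g}}{\partial y_j}(y)\,\prod_{k\notin I}\mathrm{d}y_k. $$
Because $j\notin I$, the variable $y_j$ is one of the variables of integration, ranging over $(-\infty,0]$; integrating in $y_j$ first gives $\varint_{-\infty}^{0}\partial_{y_j}\widetilde{g}\,\mathrm{d}y_j=\widetilde{g}|_{y_j=0}$, the contribution at $y_j=-\infty$ vanishing because $g$ has compact support. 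Integrating the remaining variables $y_k$, $k\notin I\cup\{j\}$, then produces exactly $\varint^{\star}_{F_j}g$, which is the claim.

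There is no genuine analytic obstacle here; the content of the lemma is a single integration by parts. The two points that need attention are (a) that the upper endpoint of the $y_j$-integration is indeed $0$ — this uses the wedge structure, namely that $y_j\le 0$ is one of the defining inequalities since $j\notin I$ — together with the compact support of $g$ to discard the boundary term at $-\infty$; and (b) the bookkeeping that the constants $K_{\alpha}(W)$ attached to $F$ and to $F_j$ are precisely the Jacobians making the two change-of-variables identities hold, so that no spurious constant survives. One may equivalently dispense with $\varphi$ and argue directly: parametrizing $F$ by $x_0-\sum_{k\notin I}t_k v_k$ with $t_k\ge 0$, where $x_0$ is the vertex of $W$, the normalization of $\varint^{\star}_{F}$ is exactly $\prod_{k\notin I}\mathrm{d}t_k$ and ${\rm D}g\cdot v_j$ equals $-\partial_{t_j}$ of $g$ along this parametrization, so the lemma is again one integration by parts in $t_j\in[0,\infty)$.
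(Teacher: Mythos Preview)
Your proof is correct and is essentially identical to the paper's own argument: both pass to standard coordinates via the diffeomorphism $\varphi$, identify ${\rm D}g(\varphi^{-1}(y))\cdot v_j$ with $\partial_{y_j}(g\circ\varphi^{-1})$ by the chain rule, and then integrate in $y_j$ over $(-\infty,0]$ using the fundamental theorem of calculus and compact support. The paper's write-up is terser and omits the explicit bookkeeping about the Jacobians $K_\alpha(W)$ that you spell out, but the content is the same.
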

\begin{proof}
Let the elements of $\llbracket 1,n \rrbracket \setminus I$ be denoted by $x_{j},x_{i_{d+2}}, \ldots, x_{i_{n}}$. One has
\begin{eqnarray*}
\varint^{\star}_{F} {\rm D}g \cdot v_{j} &=& \varint_{\{x_{i_{1}} = \cdots = x_{i_{d}} = 0\}} {\rm D}g(\varphi^{-1}(x)) \cdot v_{j} \ {\rm d}x_{j}{\rm d}x_{i_{d+2}} \ldots {\rm d}x_{i_{n}} \\
&=& \varint_{\{x_{i_{1}} = \cdots = x_{i_{d}} = 0\}} {\rm D}g(\varphi^{-1}(x)) \cdot {\rm D}_{x_{j}}(\varphi^{-1})(x) \ {\rm d}x_{j}{\rm d}x_{i_{d+2}} \ldots {\rm d}x_{i_{n}} \\
&=& \varint_{\{x_{i_{1}} = \cdots = x_{i_{d}} = 0\}} \frac{\partial}{\partial x_{j}}(g \circ \varphi^{-1})(x) \ {\rm d}x_{j}{\rm d}x_{i_{d+2}} \ldots {\rm d}x_{i_{n}} \\
&=& \varint_{\{x_{i_{1}} = \cdots = x_{i_{d}} = x_{j} = 0\}} (g \circ \varphi^{-1})(x) \ {\rm d}x_{i_{d+2}} \ldots {\rm d}x_{i_{n}} 
= \varint^{\star}_{F_{j}} g.
\end{eqnarray*}
\end{proof}

\subsection{Results for regular $2$ and $3$\--dimensional wedges}

When $n=1,2,3$, applying Lemma~\ref{lemma:int} to functions of the form $$g = {\rm D}^{q-d-1}f,$$ and repeating this as many times as necessary, we can get rid of all the vectors $v_{j}$, $j \notin I$ in the expression of 
$$\varint^{\star}_{F} {\rm D}^{q-d} f \cdot v_{\alpha - r(\alpha)},$$ and keep only integrals over faces of codimension greater than $d$ of derivatives of $f$ applied to vectors that are normal to the hyperplanes defining the faces.

Before we state and prove our result, let us introduce some useful notation. Let $C(W,F)$ be the cone generated by the set $$\{x-y, y \in W, x \in F\}.$$ If $X$ is a non-empty subset of $\mathbb{R}^n$, let $L(X)$ be the vector subspace generated by the elements of the form $y-x$, $x,y \in X$.

The following two lemmas hold in any dimension (not just $2$ and $3$).

\begin{lemma}
\label{lemma:depend}
Decompose the vectors $v_{i}$, $i \in I$, as in equation (\ref{eq:vi}). Choose another set of vectors $(w_{i})_{i \in I}$ such that	
\begin{itemize}
\item $\forall i \in I$, $w_{i}$ belongs to $$L\left(\bigcap_{j \in \llbracket 1,n \rrbracket \setminus\{i\}} \mathcal{H}_{j}\right) \cap C(W,F),$$
\item the family $((v_{j})_{j \notin I},(w_{j})_{j \in I})$ is a primitive lattice basis,
\end{itemize}
and write, for $i \in I$,
\begin{equation*} w_i = \sum_{j \notin I} \tilde{\mu}_{ij}^F v_j + \sum_{j \in I} \tilde{\zeta}_{ij}^F n_j.\end{equation*}
Then for $j \in I$, we have $\tilde{\zeta}_{ij}^F = \zeta_{ij}^F$.
\end{lemma}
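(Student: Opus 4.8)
The plan is to show that the coefficients $\tilde\zeta_{ij}^F$ in the expansion of $w_i$ along the normal directions $n_j$, $j\in I$, are forced to equal the $\zeta_{ij}^F$ because they measure the same geometric quantity, namely the component of the vector transverse to the hyperplane $\mathcal{H}_j$. First I would fix $i\in I$ and record what the hypotheses on $w_i$ really say. The condition $w_i\in L\bigl(\bigcap_{k\neq i}\mathcal{H}_k\bigr)$ says that $w_i$ lies in the line directed by $v_i$ (since $\bigcap_{k\neq i}\mathcal{H}_k$ is the edge generated by $v_i$); hence $w_i=\rho_i v_i$ for some scalar $\rho_i$, and the cone condition $w_i\in C(W,F)$ just fixes the sign/orientation of $\rho_i$. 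So the real content is that primitivity of the basis $((v_j)_{j\notin I},(w_j)_{j\in I})$ pins down $\rho_i$ up to the ambiguity that does not affect the claim.

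Next I would extract the invariant meaning of $\zeta_{ij}^F$. Pair equation (\ref{eq:vi}) with the covector $u_j$ for $j\in I$: since $\langle u_j, v_k\rangle=\delta_{jk}$ and $\langle u_j, n_k\rangle$ is $0$ for $k\in I$, $k\neq j$ because $n_k$ lies in $\mathcal{H}_k^{\mathrm{dir}}\subset\ker u_j$... wait, one must be careful: $n_k=u^k$ is the normal, and $\langle u_j,n_k\rangle$ need not vanish. The cleaner route is to pair with a covector that annihilates $L(F)$ and all $n_k$, $k\in I\setminus\{j\}$: such a covector exists because $(n_j)_{j\in I}$ together with $L(F)^{\perp}$-complementary data... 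Actually the simplest is to pair (\ref{eq:vi}) with $u_j$ directly and observe that $\langle u_j, v_i\rangle=\delta_{ij}$, so for $i\in I$, $i=j$ we get $1=\sum_{k\in I}\zeta_{ik}^F\langle u_j,n_k\rangle$, and similarly $0$ when $i\neq j$; the matrix $(\langle u_j,n_k\rangle)_{j,k\in I}$ is invertible (both $(u_k)_{k\in I}$ and $(n_k)_{k\in I}$ are bases of the same subspace $U$), so $(\zeta_{ik}^F)$ is determined as the inverse of this matrix composed with the identity — in particular it depends only on $F$, not on the choice of basis. Applying the identical pairing to the expansion of $w_i$ gives $\langle u_j, w_i\rangle=\sum_{k\in I}\tilde\zeta_{ik}^F\langle u_j,n_k\rangle$. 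Since $w_i=\rho_i v_i$ we get $\langle u_j,w_i\rangle=\rho_i\delta_{ij}=\langle u_j,v_i\rangle$ once we know $\rho_i=1$; then the two systems coincide and invertibility of $(\langle u_j,n_k\rangle)$ forces $\tilde\zeta_{ij}^F=\zeta_{ij}^F$ for all $j\in I$.

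So the crux reduces to proving $\rho_i=1$, i.e.\ that $w_i=v_i$. This is exactly where primitivity is used: both $((v_j)_{j\notin I},(v_j)_{j\in I})$ — the original dual basis — and $((v_j)_{j\notin I},(w_j)_{j\in I})$ are lattice bases of $\mathbb{Z}^n$ (or of $U\cap(\mathbb{Z}^n)$ together with a complement, matching the ``regular'' hypothesis), and they differ only in the $I$-components, with $w_i$ a scalar multiple $\rho_i v_i$ of $v_i$. A change of basis of a lattice that is block-triangular with diagonal blocks $\mathrm{Id}$ on the $I^c$-part and $\mathrm{diag}(\rho_i)$ on the $I$-part has determinant $\pm1$ iff each $\rho_i=\pm1$; the cone condition $w_i\in C(W,F)$ then selects $\rho_i=+1$ (the primitive generator of the edge pointing the correct way out of $W$ relative to $F$), giving $w_i=v_i$. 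I expect this determinant/primitivity step to be the only real obstacle, and even that is short once one notes the change-of-basis matrix is triangular; the rest is bookkeeping. I would therefore organize the proof as: (1) reduce the first hypothesis to $w_i=\rho_i v_i$; (2) use primitivity + the cone condition to get $\rho_i=1$, hence $w_i=v_i$; (3) substitute into the two expansions and compare, concluding $\tilde\zeta_{ij}^F=\zeta_{ij}^F$.

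\begin{proof}[Proof sketch]
Fix $i\in I$. The edge $\bigcap_{k\in\llbracket 1,n\rrbracket\setminus\{i\}}\mathcal{H}_k$ of $W$ is the ray generated by $v_i$, so $L\bigl(\bigcap_{k\neq i}\mathcal{H}_k\bigr)=\R v_i$ and the first hypothesis forces $w_i=\rho_i v_i$ for some $\rho_i\in\R$, with $\rho_i>0$ by the condition $w_i\in C(W,F)$. Since $((v_j)_{j\notin I},(v_j)_{j\in I})$ is the dual basis of the lattice basis $(u_1,\ldots,u_n)$ it is itself a primitive lattice basis; comparing it with the primitive lattice basis $((v_j)_{j\notin I},(w_j)_{j\in I})$, the transition matrix is block-triangular, equal to the identity on the span of $(v_j)_{j\notin I}$ and diagonal with positive entries $\rho_i$ on the span of $(v_i)_{i\in I}$. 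Its determinant is $\prod_{i\in I}\rho_i$ and must be $\pm1$, so each $\rho_i=1$ and $w_i=v_i$.

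Now pair the two decompositions with the covectors $u_j$, $j\in I$. From $\langle u_j,v_i\rangle=\delta_{ij}$ and $v_i=\sum_{k\notin I}\mu_{ik}^F v_k+\sum_{k\in I}\zeta_{ik}^F n_k$ we get, for every $j\in I$,
\[
\delta_{ij}=\langle u_j,v_i\rangle=\sum_{k\in I}\zeta_{ik}^F\langle u_j,n_k\rangle,
\]
the terms $\langle u_j,v_k\rangle$, $k\notin I$, vanishing. Likewise, using $w_i=v_i$ and the decomposition $w_i=\sum_{k\notin I}\tilde\mu_{ik}^F v_k+\sum_{k\in I}\tilde\zeta_{ik}^F n_k$,
\[
\delta_{ij}=\langle u_j,v_i\rangle=\langle u_j,w_i\rangle=\sum_{k\in I}\tilde\zeta_{ik}^F\langle u_j,n_k\rangle.
\]
The matrix $\bigl(\langle u_j,n_k\rangle\bigr)_{j,k\in I}$ is invertible, because $(u_k)_{k\in I}$ and $(n_k)_{k\in I}$ are both bases of the subspace $U\subset(\R^n)^*$, so the two linear systems above have the same unique solution. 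Hence $\tilde\zeta_{ij}^F=\zeta_{ij}^F$ for all $j\in I$.
\end{proof}
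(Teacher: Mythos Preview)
Your proof is correct and follows essentially the same strategy as the paper's: show via a unimodular change-of-basis argument, together with the cone condition, that $w_i$ and $v_i$ have the same pairings with the normals, and then conclude by comparing the two linear systems that determine the $\zeta$-coefficients. Your route is in fact slightly more direct than the paper's: you use the full strength of the hypothesis $w_i\in L\bigl(\bigcap_{k\neq i}\mathcal H_k\bigr)=\R v_i$ to get $w_i=\rho_i v_i$ at once, whereas the paper only extracts $\langle w_i,n_j\rangle=0$ for $j\in I\setminus\{i\}$ and allows a residual $L(F)$-component $\sum_{j\notin I}\alpha_{ij}v_j$ (which then vanishes upon pairing with $n_\ell$). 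You also pair with the covectors $u_j$ rather than take Euclidean inner products with $n_\ell$; since $n_\ell$ is just $u_\ell$ viewed as a vector, this is cosmetic.

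One small point to tighten in your determinant step: from $\prod_{i\in I}\rho_i=\pm1$ alone you cannot deduce that each $\rho_i=\pm1$. What you need is that the transition matrix between two lattice bases lies in $GL_n(\Z)$, so each $\rho_i$ is an integer; combined with $\rho_i>0$ (from the cone condition) and $\prod_i\rho_i=1$, this forces every $\rho_i=1$. With that clarification the argument is complete.
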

In other words, this means that the scalars $\zeta_{ij}^F$ only depend on the face $F$; when $F$ will be considered as a face of a polytope instead of a wedge, then the contribution coming from each wedge will display the same coefficient. 
\begin{proof}
By definition of the vectors $v_{j}$, $n_{j}$, $1 \leq j \leq n$, one has 
\begin{equation} \forall \ell \in I \qquad \langle v_{i},n_{\ell} \rangle = \sum_{j \notin I} \zeta_{ij}^F \langle n_{j},n_{\ell} \rangle. \label{eq:zeta}\end{equation}
Hence, the $d$ coefficients $\zeta_{ij}^F$, $j \in I$, are obtained by solving the linear system (\ref{eq:zeta}) of $d$ equations. Now, we express the vector $w_{i}$ in the basis $\mathcal{B}=(v_{j})_{1 \leq j \leq n}$:
\begin{equation*} w_{i} = \sum_{j \notin I} \alpha_{ij} v_{j} + \sum_{j \in I} \beta_{ij} v_{j}. \end{equation*}
Since the vector $w_{i}$ belongs to $L(\bigcap_{j \in \llbracket 1,n \rrbracket \setminus\{i\}} \mathcal{H}_{j})$, all the scalar products $\langle w_{i}, n_{j} \rangle$, $j \in I \setminus\{i\}$, vanish. This implies that for every $j \in I \setminus\{i\}$, $\beta_{ij} = 0$. Thus, the matrix $M$ of change of basis from $\mathcal{B}$ to 
$$\mathcal{B}'=((v_{j})_{j \notin I},(w_{j})_{j \in I})$$ is of the form
\begin{equation*} M = \begin{pmatrix} I_{n-d} & A \\ 0 & B \end{pmatrix} \end{equation*}
where $$B = \text{diag}(\beta_{11},\ldots,\beta_{dd}).$$ Since $\mathcal{B}$ and $\mathcal{B}'$ are primitive lattice bases, we have $\det(M) = \pm 1$, and hence for every $i \in I$, $\beta_{ii} = \pm 1$. But $v_{i}$ and $w_{i}$ belong to $C(W,F)$, so $\beta_{ii} = 1$. This yields that for $\ell \in I$, we have $\langle w_{i},n_{\ell} \rangle = \langle v_{i},n_{\ell} \rangle$.
\end{proof}

\begin{lemma}
\label{lemma:dependmu}
Decompose the vectors $v_{i}$, $i \in I$, as in equation (\ref{eq:vi}), and fix $j \notin I$. Set $J = I \cup \{j\}$, and choose another set of vectors $(w_{i})_{i \in J}$ such that	
\begin{itemize}
\item $\forall i \in J$, $w_{i}$ belongs to $$L\left(\bigcap_{k \in \llbracket 1,n \rrbracket \setminus\{i\}} \mathcal{H}_{k} \right) \cap C(W,F),$$
\item the family $((v_{i})_{i \notin J},(w_{i})_{i \in J})$ is a primitive lattice basis.
\end{itemize}
and write, for $i \in I$,
\begin{equation*} w_i = \tilde{\mu}_{ij}^F w_j + \sum_{k \notin J} \tilde{\mu}_{ik}^F v_k + \sum_{k \in I} \tilde{\zeta}_{ik}^F n_k.\end{equation*}
Then we have $\tilde{\mu}_{ij}^F = \mu_{ij}^F$.
\end{lemma}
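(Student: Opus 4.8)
The plan is to rerun the linear algebra in the proof of Lemma~\ref{lemma:depend}, this time tracking the coefficient along $v_j$ rather than the coefficients along the normals, and then to invoke uniqueness of the expansion of a vector in a basis. To fix notation, write $F=\bigcap_{k\in I}\mathcal{H}_k$ and $J=I\cup\{j\}$ (so that $\bigcap_{k\in J}\mathcal{H}_k=F\cap\mathcal{H}_j$ is the face of codimension $d+1$ appearing in Lemma~\ref{lemma:int}), and recall that $(v_k)_{k\notin I}$ is a basis of $L(F)$ while $(n_k)_{k\in I}$ spans its orthogonal complement; hence $\big((v_k)_{k\notin I},(n_k)_{k\in I}\big)$ is a basis of $\mathbb{R}^n$, the decomposition (\ref{eq:vi}) is precisely the expansion of $v_i$ in this basis, and $\mu^F_{ij}$ is the $v_j$-coordinate of the $L(F)$-component of $v_i$.

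The key step is to show that, for every $i\in J$, the auxiliary vector $w_i$ must equal $v_i$. Since $v_i$ generates the edge $\bigcap_{k\neq i}\mathcal{H}_k$, the first bullet forces $w_i=\gamma_i v_i$ for some scalar $\gamma_i$; consequently the matrix of the change of basis from $(v_1,\dots,v_n)$ to $\big((v_k)_{k\notin J},(w_k)_{k\in J}\big)$ is diagonal up to reindexing, with entries equal to $1$ or to the $\gamma_i$. Both families being primitive lattice bases, this matrix has determinant $\pm1$, and each $\gamma_i$ is a nonzero integer because $v_i$ is primitive and $w_i\in\mathbb{Z}^n$, so $\gamma_i=\pm1$; finally the condition $w_i\in C(W,F)$ fixes the sign and gives $\gamma_i=1$. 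This is verbatim the determinant computation carried out in the proof of Lemma~\ref{lemma:depend}, and it is the only genuine obstacle here — one must be careful that the edge, cone and lattice hypotheses together pin $w_i$ down exactly, not merely up to a sign or up to a summand in $L(F)$. In particular $w_i=v_i$ for $i\in I$ and $w_j=v_j$.

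It then remains to compare coordinates. Splitting off the index $j$ in (\ref{eq:vi}) yields, for $i\in I$,
\[
v_i=\mu^F_{ij}\,v_j+\sum_{k\notin J}\mu^F_{ik}\,v_k+\sum_{k\in I}\zeta^F_{ik}\,n_k,
\]
while the expansion in the statement, rewritten using $w_i=v_i$ and $w_j=v_j$, reads
\[
v_i=\tilde\mu^F_{ij}\,v_j+\sum_{k\notin J}\tilde\mu^F_{ik}\,v_k+\sum_{k\in I}\tilde\zeta^F_{ik}\,n_k.
\]
Both are expansions of the same vector in the basis $\big((v_k)_{k\notin I},(n_k)_{k\in I}\big)$ of $\mathbb{R}^n$, so uniqueness of coordinates gives $\tilde\mu^F_{ij}=\mu^F_{ij}$ (and, by the same token, $\tilde\mu^F_{ik}=\mu^F_{ik}$ and $\tilde\zeta^F_{ik}=\zeta^F_{ik}$, which recovers the conclusion of Lemma~\ref{lemma:depend} as well). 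Everything past the identification $w_i=v_i$ is thus routine bookkeeping.
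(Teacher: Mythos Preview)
Your approach is much cleaner than the paper's: where the paper sets up and compares two linear systems in the inner products $\langle v_k,v_\ell\rangle$ through a page of algebra, you simply observe that the edge hypothesis forces $w_i=\gamma_i v_i$, argue $\gamma_i=\pm1$ from the lattice--basis condition, and then fix the sign via the cone. For $i\in I$ this is entirely correct and does collapse the paper's computation.

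The gap is at $i=j$. Since $j\notin I$, the vector $v_j$ is tangent to $F$: one has $\langle u_k,v_j\rangle=\delta_{jk}=0$ for every $k\in I$. Consequently both $v_j$ and $-v_j$ lie in $C(W,F)$; concretely, $-v_j=x-(x+v_j)$ with $x$ any point of $F$ satisfying $\langle u_j,x\rangle\le c_j-1$, and such $x$ exist because $F$ is unbounded in the $-v_j$ direction. Hence the hypotheses as written allow $w_j=-v_j$, and your final comparison of coordinates then yields $\tilde\mu^F_{ij}=-\mu^F_{ij}$ rather than $\tilde\mu^F_{ij}=\mu^F_{ij}$. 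The cone that would fix the sign of $w_j$ is $C\bigl(W,F\cap\mathcal H_j\bigr)$, which is contained in $\{v:\langle u_j,v\rangle\ge 0\}$; with that cone your argument goes through verbatim. In fairness, the paper's own proof invokes ``the proof of the previous lemma'' to conclude $w_k=v_k+\sum_{\ell\notin J}\alpha_{k\ell}v_\ell$ for all $k\in J$, which rests on exactly the same sign determination for $k=j$ --- so the defect really lives in the statement of the lemma, and your shortcut is otherwise sound.
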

In other words, this means that the scalar $\mu_{ij}^F$ only depends on the face $F \cap \mathcal{H}_{j}$. 

\begin{proof}
By definition of the vectors $v_{k}$, $n_{k}$, $1 \leq k \leq n$, one has 
\begin{equation} \forall k \notin I \qquad \langle v_{i},v_{k} \rangle = \sum_{\ell \notin I} \mu_{i\ell}^F \langle v_{\ell},v_{k} \rangle, \label{eq:mu} \nonumber
\end{equation}
which can be written in matrix form
\begin{equation} A\nu = V \label{eq:S1}\end{equation}
where $V, \nu$ are the column vectors given by
\begin{equation*} \forall \ell \notin I \qquad V_{\ell} = \langle v_{i},v_{\ell} \rangle, \quad \nu_{\ell} = \mu_{i\ell}^F  \end{equation*}
and $A$ is the symmetric matrix whose generic coefficient is $A_{k,\ell} = \langle v_{k}, v_{\ell} \rangle$, $k,\ell \notin I$. Similarly, the constants $\tilde{\mu}_{i\ell}^F$ satisfy the system of equations
\begin{equation} \forall k \notin J \qquad \langle w_{i},v_{k} \rangle = \tilde{\mu}_{ij}^F \langle w_{j},v_{k} \rangle + \sum_{\ell \notin J} \tilde{\mu}_{i\ell}^F \langle v_{\ell},v_{k} \rangle \tag{$E_{k}$}\label{eq:Ek}\end{equation}
and
\begin{equation} \langle w_{i},w_{j} \rangle = \tilde{\mu}_{ij}^F \|w_{j}\|^2 + \sum_{\ell \notin J} \tilde{\mu}_{i\ell}^F \langle v_{\ell},w_{j} \rangle. \tag{$E_{i}$}\label{eq:Ei}\end{equation}
Thanks to the proof of the previous lemma, we know that there exists scalars $\alpha_{k\ell}$, $k \in J$, $\ell \notin J$ such that 
\begin{equation*} \forall k \in J \qquad w_{k} = v_{k} + \sum_{\ell \notin J} \alpha_{k\ell} v_{\ell}. \end{equation*}
Hence we have
\begin{equation*} \langle w_{i},w_{j} \rangle = \langle v_{i},v_{j} \rangle + \sum_{\ell \notin J} \alpha_{j\ell} \langle v_{i},v_{\ell} \rangle + \sum_{k \notin J} \alpha_{ik} \langle v_{j},v_{k} \rangle + \sum_{k,\ell \notin J} \alpha_{ik} \alpha_{j\ell} \langle v_{\ell},v_{k} \rangle \end{equation*}
as well as
\begin{eqnarray*} \forall k \notin J \qquad \langle w_{i},v_{k} \rangle &=& \langle v_{i},v_{k} \rangle + \sum_{\ell \notin J} \alpha_{i\ell} \langle v_{k},v_{\ell} \rangle \\ \langle w_{j},v_{k} \rangle &=& \langle v_{j},v_{k} \rangle + \sum_{\ell \notin J} \alpha_{j\ell} \langle v_{k},v_{\ell} \rangle 
\end{eqnarray*}
and 
\begin{equation*} \| w_{j}\|^2 = \|v_{j}\|^2 + 2 \sum_{\ell \notin J} \alpha_{j\ell} + \sum_{k,\ell \notin J} \alpha_{j\ell} \alpha_{jk} \langle v_{\ell},v_{k} \rangle. \end{equation*}
Using these relations, equations (\ref{eq:Ek}) become
\begin{equation} \langle v_{i},v_{k} \rangle + \sum_{\ell \notin J} \alpha_{i\ell} \langle v_{\ell},v_{k} \rangle = \tilde{\mu}_{ij}^F \langle v_{j},v_{k} \rangle +  \sum_{\ell \notin J} \left( \tilde{\mu}_{i\ell}^F + \tilde{\mu}_{ij}^F \alpha_{j\ell} \right) \langle v_{\ell},v_{k} \rangle \tag{$E'_{k}$}\label{eq:E'k}\end{equation}
while equation (\ref{eq:Ei}) becomes
\begin{equation}\begin{split} \langle v_{i},v_{j} \rangle + \sum_{\ell \notin J} \alpha_{j\ell} \langle v_{i},v_{k} \rangle + \sum_{k \notin J} \alpha_{ik} \langle v_{j},v_{k} \rangle + \sum_{k,\ell \notin J} \alpha_{ik} \alpha_{j\ell} \langle v_{k},v_{\ell} \rangle \\ = \tilde{\mu}_{ij}^F \|v_{j}\|^2 
+ 2 \sum_{\ell \notin J} \tilde{\mu}_{ij}^F \alpha_{j\ell} \langle v_{j},v_{\ell} \rangle + \sum_{k,\ell \notin J} \tilde{\mu}_{ij}^F \alpha_{j\ell} \alpha_{jk} \langle v_{k},v_{\ell} \rangle \\+ \sum_{\ell \notin J} \tilde{\mu}_{i\ell}^F \langle v_{\ell},v_{j} \rangle + \sum_{k,\ell \notin J} \tilde{\mu}_{i\ell}^F\alpha_{jk} \langle v_{k},v_{\ell} \rangle. \end{split} \tag{$E'_{i}$}\label{eq:E'i}\end{equation}
Considering the linear combination (\ref{eq:E'i}) - $\sum_{k \notin J} \alpha_{jk}$(\ref{eq:E'k}), we replace equation (\ref{eq:E'i}) by the new equation (we do no write the details of the computations)
\begin{equation*} \langle v_{i},v_{j} \rangle + \sum_{k \notin J} \alpha_{ik} \langle v_{j},v_{k} \rangle = \tilde{\mu}_{ij}^F \|v_{j}\|^2 + \sum_{\ell \notin J} \left( \tilde{\mu}_{i\ell}^F + \tilde{\mu}_{ij}^F \alpha_{j\ell} \right) \langle v_{j},v_{\ell} \rangle. \end{equation*}
Together with equations (\ref{eq:E'k}), this means that the coefficients $\tilde{\mu}_{i\ell}^F$ are solutions of the system
\begin{equation*} AU + V = A\tilde{\nu} \end{equation*}
where $A$ and $V$ are as before, $\tilde{\nu}$ is defined as $\nu$ but with the coefficients $\tilde{\mu}_{i\ell}^F$ instead of $\mu_{i\ell}^F$, and $U$ is the column vector whose entries are $U_{j} = 0$, $$U_{\ell} = \alpha_{i\ell} - \tilde{\mu}_{ij}^F\alpha_{j\ell}.$$ Comparing this to (\ref{eq:S1}) yields $\nu = \tilde{\nu} - U$, and in particular $\mu_{ij}^F = \tilde{\mu}_{ij}^F$.
\end{proof}

\begin{theorem} \label{theo:w}
Assume that $n \in \{1,2,3\}$. For every $q \geq 1$ and every face $F \in \mathcal{F}_{d}$ with $d \leq q$, there exists a linear differential operator $R_{q}(F,.)$ of degree $q-d$ depending only on $F$ (in the sense introduced in the previous lemmas) and involving only derivatives of $f$ in directions normal to the face $F$ such that
\begin{equation} T_{q}(W,f) = \sum_{d=0}^{n-1} \sum_{F \in \mathcal{F}_{d}} \varint_{F}^{\star} R_{q}(F,f).  \end{equation}
\end{theorem}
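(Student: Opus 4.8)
The plan is to start from the formula for $T_q(W,f)$ recalled at the beginning of this section,
\[
T_q(W,f) = \sum_{\substack{\alpha \in \mathbb{N}^n\\ |\alpha|=q}} \lambda_\alpha K_\alpha(W) \varint_{\bigcap_{i, \alpha_i>0}\mathcal{H}_i} {\rm D}^{q-\nu(\alpha)} f \cdot v_{\alpha-r(\alpha)},
\]
and to reorganize the sum according to the face $F = \bigcap_{i,\alpha_i>0}\mathcal{H}_i$ that each multi-index $\alpha$ determines. Grouping the terms for which $\{i : \alpha_i>0\}$ is a fixed subset $I$ (so $F\in\mathcal{F}_d$ with $d=|I|$), one gets $T_q(W,f) = \sum_{d=0}^{n-1}\sum_{F\in\mathcal{F}_d} S_q(F,f)$ with $S_q(F,f)$ the partial sum displayed earlier in the section. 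So the content of the theorem is: each $S_q(F,f)$ can be rewritten, after pushing all directional derivatives along the $v_j$, $j\notin I$, onto lower-dimensional faces, as $\varint_F^\star R_q(F,f)$ plus contributions to higher-codimension faces, where $R_q(F,f)$ is a differential operator of degree $q-d$ in the normal directions $n_j$, $j\in I$, depending only on $F$.

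First I would handle a single term $\varint_F^\star {\rm D}^{q-d} f \cdot v_{\alpha - r(\alpha)}$. Using the decomposition (\ref{eq:vi}) of each $v_i$, $i\in I$, as $\sum_{j\notin I}\mu_{ij}^F v_j + \sum_{j\in I}\zeta_{ij}^F n_j$ and expanding by multilinearity (exactly as set up in the paragraph before Lemma~\ref{lemma:int}), I obtain a sum of $n^{q-d}$ terms of the form $(\text{product of }\lambda^F\text{'s})\, \varint_F^\star {\rm D}^{q-d} f \cdot (w_{j_1},\ldots,w_{j_{q-d}})$ where each $w_{j}$ is either a normal $n_{j}$ ($j\in I$) or a tangential $v_{j}$ ($j\notin I$). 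For those terms in which at least one $w_{j_\ell}$ is a tangential vector $v_j$, $j\notin I$, I would repeatedly apply Lemma~\ref{lemma:int} to $g = {\rm D}^{q-d-1}f$ (and then to successively lower derivatives), each application trading one tangential derivative for an integral over $F_j = F\cap\mathcal{H}_j$, a face of one higher codimension. Here the restriction $n\le 3$ enters: since $d\le n-1\le 2$ and codimension is at most $n$, the number of distinct tangential directions is small enough that this reduction terminates cleanly without the bookkeeping becoming unmanageable (this is precisely the technical bottleneck the introduction alludes to). Iterating, $S_q(F,f)$ becomes a sum of integrals $\varint_{F'}^\star(\text{normal derivatives of }f)$ over faces $F'\supseteq$ some $\mathcal{H}_j$, i.e. of codimension $\ge d$; collecting the codimension-$d$ piece defines $R_q(F,f)$ and the higher-codimension pieces get absorbed into the $R_q(F',f)$ for $F'$ of larger codimension.

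It remains to check the two asserted properties of $R_q(F,f)$: that it has degree $q-d$, and that it depends only on $F$. The degree is immediate from the construction — we start with a $(q-d)$-th order derivative and each application of Lemma~\ref{lemma:int} lowers the order of the derivative by one while raising the codimension by one, so a term landing back on $F$ never loses derivative order, and one landing on a codimension-$(d+k)$ face carries a $(q-d-k)$-th derivative, matching degree $q-(d+k)$. The "depends only on $F$" claim is exactly what Lemmas~\ref{lemma:depend} and~\ref{lemma:dependmu} were proved for: the coefficients $\zeta_{ij}^F$ appearing in the normal-only terms are intrinsic to $F$ (Lemma~\ref{lemma:depend}), and whenever a $\mu_{ij}^F$ survives it is attached, via Lemma~\ref{lemma:int}, to an integral over $F\cap\mathcal{H}_j$, and $\mu_{ij}^F$ is intrinsic to that face by Lemma~\ref{lemma:dependmu}. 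So the whole expression is assembled from data attached to $F$ and its subfaces, independently of the ambient wedge $W$. The main obstacle is organizing the iterated application of Lemma~\ref{lemma:int} together with the multilinear expansion so that the accounting of which face each term lands on, and with what coefficient, is transparent — this is manageable in dimensions $1,2,3$ but is the step where the argument would become genuinely intricate in higher dimension.
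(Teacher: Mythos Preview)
Your proposal is correct and follows essentially the same approach as the paper: regroup $T_q(W,f)$ by the face $F$ associated to each $\alpha$, expand the $v_i$, $i\in I$, via equation~(\ref{eq:vi}), use multilinearity, and repeatedly apply Lemma~\ref{lemma:int} to push tangential derivatives onto subfaces; then invoke Lemmas~\ref{lemma:depend} and~\ref{lemma:dependmu} to conclude that the resulting coefficients depend only on the faces. The paper's own proof is briefer and proceeds by explicitly inspecting the cases $n=2$ and $n=3$ rather than giving your more uniform description, but the mechanism is identical.

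One small point worth sharpening: your explanation of where the restriction $n\le 3$ enters is slightly off. The iterated use of Lemma~\ref{lemma:int} terminates in any dimension (each application drops the order by one and raises the codimension by one). The genuine obstruction, as the paper spells out in Remark~\ref{xx}, is in the ``depends only on $F$'' claim: for $n\ge 4$ the iteration produces products such as $\mu_{12}^{\mathcal{H}_1}\,\mu_{13}^{\mathcal{H}_1\cap\mathcal{H}_2}$ multiplying an integral over $\mathcal{H}_1\cap\mathcal{H}_2\cap\mathcal{H}_3$, and Lemmas~\ref{lemma:depend} and~\ref{lemma:dependmu} do not suffice to show that such composite coefficients are intrinsic to the target face. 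In dimensions $2$ and $3$ at most one $\mu$-factor appears before one reaches the vertex, so the two lemmas cover all cases.
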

\begin{proof}
To compute $R_{q}(F,.)$, we apply the previous technique to faces of codimension smaller than $d$ and gather their contribution as integrals over $F$.
It follows from Lemma~\ref{lemma:depend} and Lemma~\ref{lemma:dependmu} that 
$R_{q}(F,.)$ depends only on the face $F$; let us briefly explain how.

If $n=2$, we have to handle two types of faces: the two edges ($d=1$) and the vertex ($d=2$) of the wedge. There is not much to say about the case of the vertex. When we integrate over an edge, and we apply our technique, we will find
\begin{itemize}
\item constants $\mu$ in front of derivatives of $f$ evaluated at the vertex, and there is nothing to prove,
\item constants $\zeta$ in front of integrals of derivatives of $f$ on $F$, and Lemma~\ref{lemma:depend} 
ensures that it only depends on the face $F$.
\end{itemize}

If $n=3$, we have three types of faces, namely planes ($d=1$), edges ($d=2$) and the vertex ($d=3$). The difference with the previous case is that when we consider integrals over a plane, we obtain integrals over edges belonging to this plane, each one displaying a factor $\mu$; 
Lemma~\ref{lemma:dependmu} ensures that it only depends on the given edge.
\end{proof}

\begin{remark} \label{xx}
\begin{enumerate}
\item In dimension $4$ and higher, the situation is  more complicated, and Lemma \ref{lemma:depend} and 
Lemma \ref{lemma:dependmu} are not enough to obtain a similar theorem. Indeed, think of the following situation: we take $n=4$ and want to evaluate $$I = \varint^{\star}_{\mathcal{H}_{1}} {\rm D}^2f \cdot (v_{1},v_{1}).$$ We start by expanding
\begin{equation*} v_{1} = \mu_{12}^{\mathcal{H}_{1}} v_{2} + \mu_{13}^{\mathcal{H}_{1}} v_{3} + \mu_{14}^{\mathcal{H}_{1}} v_{4} + \zeta_{11}^{\mathcal{H}_{1}} n_{1}, \end{equation*} 
and we apply Lemma \ref{lemma:int} to obtain
\begin{eqnarray*} I = \mu_{12}^{\mathcal{H}_{1}} \varint^{\star}_{\mathcal{H}_{1} \cap \mathcal{H}_{2}} {\rm D}f \cdot v_{1} + \mu_{13}^{\mathcal{H}_{1}} \varint^{\star}_{\mathcal{H}_{1} \cap \mathcal{H}_{3}} {\rm D}f \cdot v_{1} \\
+ \mu_{14}^{\mathcal{H}_{1}} \varint^{\star}_{\mathcal{H}_{1} \cap \mathcal{H}_{4}} {\rm D}f \cdot v_{1} + \zeta_{11}^{\mathcal{H}_{1}} \varint^{\star}_{\mathcal{H}_{1}} {\rm D}^2 f \cdot (v_{1},n_{1}). \end{eqnarray*}
We have to apply the method one more time for each of these integrals. For instance, we put 
$$K = \varint^{\star}_{\mathcal{H}_{1} \cap \mathcal{H}_{2}} {\rm D}f \cdot v_{1}$$ and to compute this integral, we write
\begin{equation*} v_{1} = \mu_{13}^{\mathcal{H}_{1} \cap \mathcal{H}_{2}} v_{3} + \mu_{14}^{\mathcal{H}_{1} \cap \mathcal{H}_{2}} v_{4}  + \zeta_{11}^{\mathcal{H}_{1} \cap \mathcal{H}_{2}} n_{1} + \zeta_{12}^{\mathcal{H}_{1} \cap \mathcal{H}_{2}} n_{2} \end{equation*}
which yields, again thanks to Lemma~\ref{lemma:int}
\begin{equation*} \begin{split} K = \mu_{13}^{\mathcal{H}_{1} \cap \mathcal{H}_{2}} \varint^{\star}_{\mathcal{H}_{1} \cap \mathcal{H}_{2} \cap \mathcal{H}_{3}} f + \mu_{14}^{\mathcal{H}_{1} \cap \mathcal{H}_{2}} \varint^{\star}_{\mathcal{H}_{1} \cap \mathcal{H}_{2} \cap \mathcal{H}_{4}} f  + \\ 
\zeta_{11}^{\mathcal{H}_{1} \cap \mathcal{H}_{2}} \varint^{\star}_{\mathcal{H}_{1} \cap \mathcal{H}_{2}} {\rm D}f \cdot n_{1} + \zeta_{12}^{\mathcal{H}_{1} \cap \mathcal{H}_{2}} \varint^{\star}_{\mathcal{H}_{1} \cap \mathcal{H}_{2}} {\rm D}f \cdot n_{2} \end{split}\end{equation*}

Hence, in the expression of $I$, we obtain the term 
$$\mu_{12}^{\mathcal{H}_{1}} \mu_{13}^{\mathcal{H}_{1} \cap \mathcal{H}_{2}} \varint^{\star}_{\mathcal{H}_{1} \cap \mathcal{H}_{2} \cap \mathcal{H}_{3}} f;$$ does the factor only depend on the face $\mathcal{H}_{1} \cap \mathcal{H}_{2} \cap \mathcal{H}_{3}$ ? We think that our previous lemmas are not enough to give an answer to this question.

\item In principle, one should be able to obtain explicit expressions for the operators $R_{q}(F,f)$, but this leads to computations involving a very large number of constants, so it is not reasonable to try to write such an expression. However, if we restrict ourselves to dimension $2$, we can be fully explicit, as we will see later.
\end{enumerate}
\end{remark}

\section{Asymptotic expansion for polygons and $3$\--dimensional polytopes}

As in Section~\ref{sec:intro}, let $\Delta \subset \R^n$ be a Delzant polytope with vertices in $\Z^n$ in dimension $n \in \{1,2,3\}$ with equations
$
\langle u^i,\, x \rangle \le c_i$, $i \in \{1,\ldots,d\}$. 
Recall that for $m \in \llbracket 1,n \rrbracket$,  $\mathcal{F}_{m}$ denotes the set of faces of codimension $m$ of $\Delta$. 

We introduce as in Theorem~\ref{theo:w} the operators $R_{q}(F,.)$ associated to a face $F$ of the polytope (remembering that it only depends on the face as part of the polytope). For any integer $q$, we define the operator $T_q(\Delta,.)$ by
\begin{equation} T_q(\Delta,f) = \sum_{m=0}^{n-1} \sum_{F \in \mathcal{F}_{m}} \varint_{F}^{\star} R_{q}(F,f).  \label{eq:Tq}\end{equation}

\begin{theorem} \label{h2}

If $f \in \op{C}^{\infty}_0(\mathbb{R}^n)$, we have that
$$
\frac{1}{N^n} \sum_{k \in \mathbb{Z}^n \cap N\Delta} f \Big( \frac{k}{N} \Big)= \sum_{q \geq 0} N^{-q} T_{q}(\Delta,f).$$

\label{thm:main}\end{theorem}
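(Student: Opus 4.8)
The starting point is the Guillemin--Sternberg formula for $\Delta$, Theorem~\ref{regsim:GS}:
$$\frac{1}{N^n}\sum_{k\in\Z^n\cap N\Delta}f\Big(\frac kN\Big)\sim\Big(\tau\Big(\frac1N\frac{\partial}{\partial h}\Big)\varint_{\Delta_h}f(x)\,{\rm d}x\Big)(h=0),$$
an equality modulo $\mathcal O(N^{-\infty})$ (so the equality sign in the statement is to be read in this asymptotic sense). The plan is to expand the right--hand side exactly as in the proof of Proposition~\ref{keylemma}: writing $\tau(s_1,\dots,s_d)=\sum_{\alpha\in\N^d}\lambda_\alpha s^\alpha$ with $\lambda_\alpha=\frac1{\alpha!}\prod_i b_{\alpha_i}$, the coefficient of $N^{-q}$ on the right is $\sum_{|\alpha|=q}\lambda_\alpha\big(\frac{\partial^\alpha}{\partial h^\alpha}\varint_{\Delta_h}f\big)(h=0)$, so it suffices to prove, for every $q$, the identity
$$\sum_{\alpha\in\N^d,\ |\alpha|=q}\lambda_\alpha\Big(\frac{\partial^\alpha}{\partial h^\alpha}\varint_{\Delta_h}f\Big)(h=0)=T_q(\Delta,f),$$
where $T_q(\Delta,f)$ is the operator of \eqref{eq:Tq}.

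The first step is a vanishing and localisation statement for the individual terms, and this is where the polytope differs from a wedge. Writing $\varint_{\Delta_h}f=\varint_{\R^n}f(x)\prod_{i=1}^dH\big(c_i+h_i-\langle u^i,x\rangle\big)\,{\rm d}x$, with $H$ the Heaviside function, and differentiating in $h$, one sees that $\big(\frac{\partial^\alpha}{\partial h^\alpha}\varint_{\Delta_h}f\big)(h=0)$ is $f$ integrated against a product of derivatives of Dirac masses supported on $\bigcap_{i:\alpha_i>0}\mathcal H_i$ and cut down by the remaining inequalities. Since $\Delta$ is simple, this forces the term to vanish unless the facets indexed by $\{i:\alpha_i>0\}$ meet along a face $F$ of $\Delta$ of codimension exactly $m=\nu(\alpha)$; in particular all the cross terms coming from pairs of non--adjacent facets drop out, a phenomenon absent in Proposition~\ref{keylemma}, where all $n$ facets pass through the single vertex. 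When the term survives, a change of variables localised at any vertex $v\in F$ --- identical to the one in the proof of Proposition~\ref{keylemma}, and unimodular because $\Delta$ is Delzant, so that the measure it induces on $F$ is precisely the normalised one $\varint_F^\star$ --- identifies it with $\varint_F^\star\,{\rm D}^{q-m}f\cdot v_{\alpha-r(\alpha)}$, the $v_i$ being the primitive edge vectors of the tangent wedge at $v$; in particular this quantity does not depend on the choice of $v$. Grouping the $\alpha$'s according to the face $F=\bigcap_{i:\alpha_i>0}\mathcal H_i$ they single out --- a surviving $\alpha$ has support equal to the unique index set of $F$, by simplicity --- then writes the coefficient of $N^{-q}$ as a sum over the faces $F$ of $\Delta$ of precisely the ``raw'' contributions that occur in $T_q(W,f)$ for any regular wedge $W$ having $F$ as a face.

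It remains to turn each raw contribution into the normalised form $\varint_F^\star R_q(F,f)$, which is done by running the procedure of Section~\ref{subsection:expwedge} face by face: decompose each non--normal vector $v_i$ as in \eqref{eq:vi} and apply Lemma~\ref{lemma:int} repeatedly to transfer every derivative that is not normal to the current face onto a deeper face; for $n\in\{1,2,3\}$ this terminates after at most two transfers and leaves on each face only derivatives of $f$ in directions normal to it, exactly as in the proof of Theorem~\ref{theo:w}. Lemma~\ref{lemma:depend} and Lemma~\ref{lemma:dependmu} ensure that the scalars produced along the way depend only on the faces involved, not on the tangent wedge one started from, so the operators $R_q(F,\cdot)$ coming from the different vertices agree and the sum of all face contributions is exactly $T_q(\Delta,f)$ of \eqref{eq:Tq}; tracking the $\mathcal O(N^{-\infty})$ error inherited from Theorem~\ref{regsim:GS} then finishes the proof. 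The delicate point I expect is precisely this last step --- the bookkeeping of the reduction over the \emph{bounded} faces of the polytope, whereas Lemma~\ref{lemma:int} was phrased for the unbounded faces of a wedge, and the check that the iterated uses of Lemma~\ref{lemma:int} give consistent coefficients at codimensions two and three. That is what Lemmas~\ref{lemma:depend}--\ref{lemma:dependmu} are for, and it is also where the restriction to $n\le3$ becomes essential, as discussed in Remark~\ref{xx}.
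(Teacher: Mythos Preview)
Your approach is genuinely different from the paper's and contains a real gap.

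The paper does \emph{not} start from Theorem~\ref{regsim:GS} and never computes $\frac{\partial^\alpha}{\partial h^\alpha}\varint_{\Delta_h}f$ directly on the bounded polytope. Instead it reduces to wedges by a partition of unity: cover $\Delta$ by open sets $\Omega_i$, one per vertex $v_i$, with $\Omega_i$ meeting only the facets through $v_i$; write $f=\sum_i f_i$ with $\op{supp}f_i\subset\Omega_i$; observe that the Riemann sum of $f_i$ over $N\Delta$ equals that over the tangent wedge $NW_i$; apply Proposition~\ref{keylemma} and Theorem~\ref{theo:w} to each $(W_i,f_i)$; and recombine using linearity of $R_q(F,\cdot)$ together with the face--only dependence from Lemmas~\ref{lemma:depend}--\ref{lemma:dependmu}. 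That is the whole proof.

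The gap in your route is the identification in your first step. The claim
\[
\Big(\frac{\partial^\alpha}{\partial h^\alpha}\varint_{\Delta_h}f\Big)(h{=}0)\;=\;\varint_F^{\star}\,{\rm D}^{q-m}f\cdot v_{\alpha-r(\alpha)},
\]
with $v_i$ the edge vectors at some vertex $v\in F$, is \emph{false} when $F$ is a bounded face: unlike formula~\eqref{eq:standard} for a wedge, repeated differentiation in a single $h_j$ produces boundary contributions from the codimension--two faces of $\mathcal H_j\cap\Delta$. For the standard triangle ($-x\le0$, $-y\le0$, $x+y\le1$) one finds
\[
\Big(\frac{\partial^2}{\partial h_2^2}\varint_{\Delta_h}f\Big)(h{=}0)\;=\;-\varint_0^1 f_y(x,0)\,{\rm d}x\;+\;f(1,0),
\]
whereas your formula gives $-\varint_0^1 f_y$ using the vertex $(0,0)$ (where $v_2=(0,-1)$) and $f(1,0)-f(0,0)-\varint_0^1 f_y$ using the vertex $(1,0)$ (where $v_2=(1,-1)$). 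Neither matches, so in particular your assertion ``this quantity does not depend on the choice of $v$'' fails. These extra boundary terms multiply with $|\alpha|$ and cascade across faces; the partition of unity is precisely what makes them disappear, since for $f_i$ supported near $v_i$ the bounded faces of $\Delta$ are indistinguishable from the unbounded faces of $W_i$ and \eqref{eq:standard} applies verbatim. You correctly sensed a boundedness issue, but located it one step too late: it already spoils the Heaviside computation, not only the subsequent reduction via Lemma~\ref{lemma:int}.
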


\begin{proof}
Notice first that $\Delta = \bigcap_{i=1}^p W_i$ where $p$ is the number of vertices of $\Delta$ and $W_i$ is the regular wedge which is the intersection of the $n$ facets $\mathcal{H}^i_j$, $1 \leq j \leq n$, intersecting at the vertex $v_i$. Cover $\Delta$ by open sets $\Omega_i$, $1 \leq i \leq p$, such that $\Omega_i$ contains the vertex $v_i$ and does not intersect any other facet than the $\mathcal{H}^i_j$, $1 \leq j \leq n$. Choose a partition of unity associated to this open covering and write $f = \sum_{i=1}^p f_i$ where $f_i \in \op{C}^{\infty}_0(\mathbb{R}^n)$ has support included in $\Omega_i$. Then 
\begin{equation*} \sum_{k \in \mathbb{Z}^n \cap N\Delta} f \Big( \frac{k}{N} \Big) = \sum_{i=1}^p \sum_{k \in \mathbb{Z}^n \cap N W_i} f_i \Big( \frac{k}{N} \Big).\end{equation*}
Now, from formula (\ref{eq:expansionwedge}), we know that for $1 \leq i \leq d$
\begin{equation*} \sum_{k \in \mathbb{Z}^n \cap N W_i} f_i \Big( \frac{k}{N} \Big) \sim \sum_{\alpha \geq 0} N^{-q} T_{q}(W_i,f_i); \end{equation*}
hence it is enough to check that for all $q$
\begin{equation*} \sum_{i=1}^p T_{q}(W_i,f_i) = T_{q}(\Delta,f). \end{equation*}
This amounts to show that for each face $F$ of the polytope
\begin{equation*} \sum_{i=1}^p R_{q}(F,f_i) = R_{q}(F,f). \end{equation*}
But this is clear because $R_{q}(F,.)$ is linear and because $\sum_{i=1}^p f_{i} = f$.
\end{proof}

\section{Explicit formula in dimension $n=2$} \label{n2case}

We would like to compute explicitly the operators $R_{q}(\Delta,.)$; unfortunately, as already said, this seems to be quite complicated in all generality. However, we can give nice formulas in dimension $2$.

Let $\Delta$ be a regular integer polygon defined by \textup{(\ref{for:1})}. In this case, we only have two types of faces: vertices (codimension $2$) and edges (codimension $1$). Let $E$ (resp. $V$) be the set of edges (resp. vertices) of $\Delta$. If $e$ belongs to $E$, let $n_e$ be the associated outward primitive normal vector; if $v$ belongs to $V$, let $(w_{1}(v),w_{2}(v))$ be the integral basis of $\mathbb{Z}$ such that the two edges meeting at $v$ are contained in the half-lines $v + \lambda w_{i}(v)$, $\lambda \geq 0$; we denote by $e_{i}$ the edge generated by $w_{i}(v)$. Define the quantities
\begin{equation*} \eta_{1}(v) = \frac{\langle w_{1}(v),w_{2}(v) \rangle}{||w_{1}(v)||^2}, \qquad \eta_{2}(v) = \frac{\langle w_{1}(v),w_{2}(v) \rangle}{||w_{2}(v)||^2}.  \end{equation*}
and $\mu(v) = \eta_{1}(v) + \eta_{2}(v)$.

Now, let $e$ be an edge, and let $C(\Delta,e)$ be the cone generated by the set $\{x-y, y \in \Delta, x \in e\}$. Given a generator $v_{1}$ of $e \cap \mathbb{Z}^2$, there exists a vector $v_{2} \in C(\Delta,e) \cap \mathbb{Z}^2$ such that $(v_{1},v_{2})$ is a primitive lattice basis of $\mathbb{Z}^2$.
\begin{lemma}[Easy version of Lemma \ref{lemma:depend}]
The quantity
\begin{equation*} \zeta(e) = \frac{\langle v_{2},n_{e} \rangle}{||n_{e}||^2} \end{equation*}
does not depend on the choice of $v_{2} \in C(\Delta,e) \cap \mathbb{Z}^2$.
\end{lemma}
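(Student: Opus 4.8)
The plan is to compare two admissible choices of $v_{2}$ and show they yield the same value of $\langle v_{2},n_{e}\rangle$. First I would note that if $v_{2},v_{2}'\in C(\Delta,e)\cap\Z^{2}$ both complete $v_{1}$ to a primitive lattice basis of $\Z^{2}$, then expressing $v_{2}'$ in the basis $(v_{1},v_{2})$ gives $v_{2}'=av_{1}+bv_{2}$ with $a,b\in\Z$, and the change-of-basis matrix $\bigl(\begin{smallmatrix}1&a\\0&b\end{smallmatrix}\bigr)$ lies in $GL_{2}(\Z)$, so $b=\varepsilon\in\{-1,+1\}$. Since $v_{1}$ spans the direction of the edge $e$ and $n_{e}$ is normal to $e$, we have $\langle v_{1},n_{e}\rangle=0$, hence $\langle v_{2}',n_{e}\rangle=\varepsilon\langle v_{2},n_{e}\rangle$; it therefore suffices to rule out $\varepsilon=-1$.

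To pin down the sign I would use the cone condition. Writing the supporting inequality of the facet $e$ of $\Delta$ as $\langle n_{e},x\rangle\le c_{e}$, with equality precisely on $e$, every generator $x-y$ of $C(\Delta,e)$ (with $x\in e$, $y\in\Delta$) satisfies $\langle n_{e},x-y\rangle=c_{e}-\langle n_{e},y\rangle\ge 0$; consequently $C(\Delta,e)$ is contained in the closed half-space $\{z:\langle n_{e},z\rangle\ge 0\}$, so $\langle n_{e},v_{2}\rangle\ge 0$ and $\langle n_{e},v_{2}'\rangle\ge 0$. Moreover neither pairing can vanish: in $\R^{2}$ the kernel of the linear form $\langle n_{e},\cdot\rangle$ is the line $\R v_{1}$, and $v_{2}$ (being part of a lattice basis together with $v_{1}$) is not in $\R v_{1}$, and likewise for $v_{2}'$. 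Hence $\langle n_{e},v_{2}\rangle>0$ and $\langle n_{e},v_{2}'\rangle>0$, which forces $\varepsilon=1$ and therefore $\langle v_{2}',n_{e}\rangle=\langle v_{2},n_{e}\rangle$. Dividing by $\|n_{e}\|^{2}$ then shows that $\zeta(e)$ is well defined (and, incidentally, strictly positive).

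The argument has no real obstacle: it is exactly the two-dimensional, one-facet specialization of the determinant-plus-cone reasoning in the proof of Lemma~\ref{lemma:depend} (there the diagonal block $B$ is $1\times 1$). The only point requiring a line of care is the inclusion $C(\Delta,e)\subseteq\{\langle n_{e},\cdot\rangle\ge 0\}$, together with the strict positivity of this pairing on any lattice vector completing $v_{1}$ to a basis, and both follow immediately from the definitions once $e$ is viewed as a facet of $\Delta$.
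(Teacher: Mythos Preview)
Your proof is correct and follows the same route as the paper's: write the second admissible vector in the basis $(v_{1},v_{2})$, use that the change-of-basis determinant is $\pm 1$, and then invoke the cone condition to exclude the $-1$ case. The only difference is that you spell out explicitly why membership in $C(\Delta,e)$ forces the sign (via the half-space $\{\langle n_{e},\cdot\rangle\ge 0\}$ and the observation that $v_{2}\notin\R v_{1}$), whereas the paper simply asserts ``since both $v_{2}$ and $w_{2}$ belong to $C(\Delta,e)$, the only possibility is $\beta=1$.''
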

\begin{proof}
The lemma follows from Lemma \ref{lemma:depend}, but its proof is very simple, so we present it next.
Choose another vector $w_{2} \in C(\Delta,e) \cap \mathbb{Z}^2$ such that $(v_{1},w_{2})$ is a primitive lattice basis of $\mathbb{Z}^2$. Write $w_{2} = \alpha v_{1} + \beta v_{2}$; then, one has $\langle w_{2},n_{e} \rangle = \beta \langle v_{2},n_{e} \rangle$. The matrix of the change of the basis is of the form 
\begin{equation*} A = \begin{pmatrix} 1 & \alpha \\ 0 & \beta \end{pmatrix}; \end{equation*}
because its determinant must be $\pm 1$, we have $\beta = \pm 1$. Since both $v_{2}$ and $w_{2}$ belong to $C(\Delta,e)$, the only possibility is $\beta =1$.
\end{proof}

\begin{theorem} 
\label{lemma:polydim2}
In Theorem \ref{thm:main} the operators $T_q(\Delta,.)$ are given by:
\begin{itemize}
\item
$T_0(\Delta,f)=\varint_{\Delta} f(x) {\rm d}x$;\\
\item
$T_1(\Delta,f)=\frac{1}{2} \sum_{e \in E} \varint_{e}^{\star} f$;\\
\item
$T_2(\Delta,f)=\sum_{v \in V}\left(\frac{1}{4} {+} \frac{\mu(v)}{12} \right) f(v)-\frac{1}{12} \sum_{e \in E} \zeta(e) \varint_{e}^{\star} {\rm D}f \cdot n_e
$;\\
\item if $p>1$, then
\begin{equation*} T_{2p}(\Delta,f) = \sum_{e \in E} R_{2p}(e,f) + \sum_{v \in V} R_{2p}(v,f) \end{equation*}
where
\begin{equation*}R_{2p}(e,f) = (-1)^{p-1} \frac{B_p}{(2p)!} \zeta(e)^{2p-1}  \varint_{e}^{\star}{\rm D}^{2p-1}f \cdot (n_e,\ldots,n_{e})  \end{equation*}
and $R_{2p}(v,f)$ is equal to
\begin{equation*} \begin{split}   (-1)^{p-2} \sum_{m+\ell=p \atop m,\ell\geq 1}  \frac{B_mB_{\ell}}{(2m)! (2 \ell)!} {\rm D}^{2p-2}f(v) \cdot (\underbrace{w_{1}(v),\ldots,w_{1}(v)}_{2m-1 \,\, {\rm times}},\underbrace{w_{2}(v),\ldots,w_{2}(v)}_{2\ell-1 \,\, {\rm times}})  \\ 
+ (-1)^{p-2} \frac{B_p}{(2p)!} \eta_{1}(v) \sum_{k=0}^{2p-2} \zeta(e_{1})^k {\rm D}^{2p-2}f \cdot (\underbrace{n_{e_{1}},\ldots,n_{e_{1}}}_{k \,\, {\rm times}},\underbrace{w_{2}(v),\ldots,w_{2}(v)}_{2p-2-k \,\, {\rm times}}) \\
+ (-1)^{p-2} \frac{B_p}{(2p)!} \eta_{2}(v) \sum_{k=0}^{2p-2} \zeta(e_{2})^k {\rm D}^{2p-2}f \cdot (\underbrace{n_{e_{2}},\ldots,n_{e_{2}}}_{k \,\, {\rm times}},\underbrace{w_{1}(v),\ldots,w_{1}(v)}_{2p-2-k \,\, {\rm times}});
\end{split} \end{equation*}
\item
if $p>1$, $T_{2p+1}(\Delta,f)$ is equal to
\begin{equation*} 
 \frac{(-1)^{p-1}B_p}{2 (2p)!}   \sum_{v \in V} \left( {\rm D}^{2p-1}f(v) \cdot ({w_1(v),\ldots,w_1(v)}) \\
+ {\rm D}^{2p-1}f(v) \cdot ({w_2(v),\ldots,w_2(v)})   \right).
\end{equation*}
\end{itemize}\end{theorem}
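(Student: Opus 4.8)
The plan is to push the machinery of Sections~\ref{subsection:expwedge}--\ref{thm:main} all the way through in dimension $n=2$, where a regular wedge has just two facets and a Delzant polygon only two kinds of proper faces, edges and vertices. By Theorem~\ref{thm:main} and (\ref{eq:Tq}) it suffices to determine the operators $R_q(e,\cdot)$ attached to an edge $e$ and $R_q(v,\cdot)$ attached to a vertex $v$. Following the proof of Theorem~\ref{thm:main}, both come from the wedges $W_v$ at the vertices: a vertex operator is read off from the unique wedge containing that vertex, while an edge operator is assembled from the two wedges at the two endpoints of $e$, the two contributions agreeing because $R_q(e,\cdot)$ depends only on the face (Lemma~\ref{lemma:depend}, in the easy $n=2$ form recalled in Section~\ref{n2case}). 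So the whole proof is a matter of computing $T_q(W_v,f)$ explicitly and then running the reduction of Section~\ref{subsection:expwedge}.

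First I would specialise Proposition~\ref{keylemma} to $n=2$. With $(v_1,v_2)$ the dual basis of $(u^1,u^2)$ and $\mathcal{H}_1,\mathcal{H}_2$ the two facets of $W_v$, the multi-indices $\alpha\in\mathbb N^2$ with $|\alpha|=q$ come in three shapes: $\alpha=(q,0)$ and $\alpha=(0,q)$ each give a facet term over $\mathcal{H}_i$, with coefficient $b_q/q!$, carrying $q-1$ copies of the dual vector transverse to $\mathcal{H}_i$; while $\alpha=(a,b)$ with $a,b\ge1$ gives a vertex term $\frac{b_ab_b}{a!b!}\,{\rm D}^{q-2}f(v)\cdot(v_1^{\,a-1},v_2^{\,b-1})$ ($K_\alpha(W)=1$ here, and for the facet terms $K_\alpha(W)$ is absorbed into $\varint^{\star}$). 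Plugging in $b_n=0$ for odd $n\ge3$ yields the parity structure at once: for odd $q\ge3$ the facet terms vanish, and among the vertex terms only $a=1$ or $b=1$ survive; for even $q=2p>2$ only the terms with $a=2m$, $b=2\ell$, $m+\ell=p$, $m,\ell\ge1$ survive; for $q=2$ the single vertex term is $a=b=1$, and for $q\le1$ there is none. This already gives the vanishing of the odd facet contributions, the first line of $R_{2p}(v,f)$, and the overall shape of $T_0$, $T_1$ and $T_{2p+1}(\Delta,f)$.

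Next I would run the reduction. The dual vector in each facet term is transverse to its facet, so I expand it via (\ref{eq:vi}) as a scalar times a vector along the facet plus a scalar times the outward normal $n_e$; the easy version of Lemma~\ref{lemma:depend} identifies the normal scalar as $\zeta(e)$, and a one-line computation with $\langle u^i,v_j\rangle=\delta_{ij}$ identifies the tangential one with $\eta_1(v)$ or $\eta_2(v)$ up to sign. Expanding the $(q-1)$-st tensor power by multilinearity and applying Lemma~\ref{lemma:int} once to every monomial that still contains a tangential factor turns that monomial into an evaluation at the relevant endpoint, leaving over $e$ only the purely normal monomial $\frac{b_q}{q!}\zeta(e)^{q-1}\varint^{\star}_e{\rm D}^{q-1}f\cdot(n_e,\ldots,n_e)$; this is $R_{2p}(e,f)$, and the edge term of $T_2$ when $q=2$. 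Collecting at a fixed vertex $v$ the direct vertex term of $W_v$ together with the evaluations pushed there from the two incident edges, and rewriting $\frac{b_{2m}b_{2\ell}}{(2m)!(2\ell)!}$, $\frac{b_1b_{2p}}{(2p)!}$, $\frac{b_{2p}}{(2p)!}$ in terms of the $B_p$'s while summing the $\zeta(e_i)^k$ powers against the binomial coefficients, produces the three lines of $R_{2p}(v,f)$, the $\bigl(\tfrac14+\tfrac{\mu(v)}{12}\bigr)f(v)$ term of $T_2$, and the $T_{2p+1}$ formula; assembling over $V$ and $E$ (each edge counted once, from both its endpoints) gives $T_q(\Delta,\cdot)$.

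The part I expect to be the real labor --- nothing here is conceptually new beyond Sections~\ref{subsection:expwedge}--\ref{thm:main} --- is the bookkeeping of signs and constants: matching the orientation of each dual vector $v_i$ against the edge generator $w_i(v)$ (they point in opposite directions, which flips signs in the odd-degree contractions), keeping straight which facet of $W_v$ is which edge of $\Delta$ (hence which of $n_{e_1},n_{e_2},\eta_1(v),\eta_2(v),\zeta(e_1),\zeta(e_2)$ enters where), and carrying the binomial coefficients and the $(-1)^p$ from $b_{2p}=(-1)^{p-1}B_p$ through the multilinear expansion and reindexing so that the $\zeta(e_i)^k$ sums collapse to exactly the displayed expressions. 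Once that is done carefully to the end, reading off the coefficient of $N^{-q}$ completes the proof.
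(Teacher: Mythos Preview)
Your proposal is correct and follows essentially the same approach as the paper's own proof: localize to the wedge $W_v$ at each vertex, read off $T_q(W_v,f)$ from Proposition~\ref{keylemma}, use the vanishing of $b_n$ for odd $n\ge3$ to isolate the surviving multi-indices, decompose the transverse vector into tangential and normal parts (recovering the coefficients $\eta_i(v)$ and $\zeta(e_j)$), and push the tangential pieces to the vertex via Lemma~\ref{lemma:int}. The one tactical difference is that the paper peels off one copy of $w_i$ at a time in a short induction rather than expanding $(w_i)^{\otimes(2p-1)}$ binomially all at once; the iterative version lands directly on $\eta_j(v)\sum_{k=0}^{2p-2}\zeta(e_j)^k\,{\rm D}^{2p-2}f\cdot(n_{e_j}^{\,k},w_i^{\,2p-2-k})$ with the \emph{transverse} vector $w_i$ still appearing, so no binomial resummation or change of basis back from $w_j$ to $w_i$ is needed to match the displayed formula.
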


\begin{remark}
Theorem~\ref{lemma:polydim2} recovers the formula  in \cite[Corollary $5.4$]{Ta2010}. To compare the two formulas, one may notice that Tate does not separate the even and odd cases, and that in the odd case nearly every coefficient in Tate's formula vanishes because of the properties of the Bernoulli numbers.
\end{remark}

\begin{proof}

We have to compute the operators $R_{q}(F,.)$ as in Section \ref{subsection:expwedge}. We start by the case $q=2$. Let $v$ be a vertex and let $W$ be the wedge formed by this vertex and the two incident edges. Define the vectors $w_{1}(v),w_{2}(v)$ as before, and let $e_{1}$ (resp. $e_{2}$) be the edge generated by $w_{1}(v)$ (resp. $w_{2}(v)$). 

We have
\begin{equation*} T_{2}(W,f) = \frac{1}{4} f(v) - \frac{1}{12} \left( \varint^{\star}_{e_{1}} {\rm D}f\cdot w_{2}(v) +  \varint^{\star}_{e_{2}} {\rm D}f\cdot w_{1}(v) \right) \end{equation*}
If $n_{i}$ is the outward primitive vector normal to the edge $e_{i}$, we write
\begin{equation*} w_{i}(v) = \alpha_{i} w_{j}(v) + \beta_{i} n_{j}  \end{equation*}
where $j=2$ (resp. 1) if $i=1$ (resp. 2). Taking the scalar product with $n_{j}$ and $w_{j}(v)$, we find
\begin{equation*} \alpha_{i} = \frac{\langle w_{i}(v),w_{j}(v) \rangle}{||w_{j}(v)||^2} = \eta_{j}(v), \qquad \beta_{i} = \frac{\langle w_{i}(v),n_{j} \rangle}{||n_{j}||^2} = \zeta(e_{j}). \end{equation*}
Now, thanks to lemma \ref{lemma:int}, we have {(being careful that the vector $w_i(v)$ is the opposite of the vector $v_j$ in this lemma)}
\begin{equation*} \varint^{\star}_{e_{j}} {\rm D}f\cdot w_{i}(v) = {-}\alpha_{i} f(v) + \zeta(e_{j}) \varint^{\star}_{e_{j}} {\rm D}f\cdot n_{j}.  \end{equation*}
Adding the contributions from each vertex, we obtain the desired formula.

Now, let $p >1$; then 
\begin{equation*} \begin{split} T_{2p}(W,f) = (-1)^{p-1} \frac{B_p}{(2p)!}  \varint_{e_{1}}^{\star}{\rm D}^{2p-1}f \cdot (w_{2}(v),\ldots,w_{2}(v)) \\ + (-1)^{p-1} \frac{B_p}{(2p)!} \varint_{e_{2}}^{\star}{\rm D}^{2p-1}f \cdot (w_{1}(v),\ldots,w_{1}(v))  \\ 
+ (-1)^{p-2} \sum_{m+\ell=p,\atop m,\ell\geq 1}  \frac{B_mB_{\ell}}{(2m)! (2 \ell)!}  {\rm D}^{2p-2}f(v) \cdot (\underbrace{w_{1}(v),\ldots,w_{1}(v)}_{2m-1 \,\, {\rm times}},\underbrace{w_{2}(v),\ldots,w_{2}(v)}_{2\ell-1 \,\, {\rm times}}). \end{split} \end{equation*}
We write
\begin{eqnarray*} \begin{split} \varint_{e_{j}}^{\star}{\rm D}^{2p-1}f \cdot (w_{i}(v),\ldots,w_{i}(v)) 
&=& \eta_{j}(v) \varint_{e_{j}}^{\star}{\rm D}^{2p-1}f \cdot (w_{j}(v),w_{i}(v),\ldots,w_{i}(v)) \\ 
&&+ \zeta(e_{j}) \varint_{e_{j}}^{\star}{\rm D}^{2p-1}f \cdot (n_{j},w_{i}(v),\ldots,w_{i}(v)). \end{split}
\end{eqnarray*}
By Lemma \ref{lemma:int}, we have 
\begin{equation*} 
\varint_{e_{j}}^{\star}{\rm D}^{2p-1}f \cdot (w_{j}(v),w_{i}(v),\ldots,w_{i}(v)) = {-} {\rm D}^{2p-2}f \cdot (w_{i}(v),\ldots,w_{i}(v))
\end{equation*}
and hence we obtain 
\begin{equation*} \begin{split} \varint_{e_{j}}^{\star}{\rm D}^{2p-1}f \cdot (w_{i}(v),\ldots,w_{i}(v)) = {-}\eta_{j}(v) {\rm D}^{2p-2}f \cdot (w_{i}(v),\ldots,w_{i}(v)) \\ + \zeta(e_{j}) \varint_{e_{j}}^{\star}{\rm D}^{2p-1}f \cdot (n_{j},w_{i}(v),\ldots,w_{i}(v)). \end{split}
\end{equation*}
By a straightforward induction, this yields 
\begin{equation*} \begin{split} \varint_{e_{j}}^{\star}{\rm D}^{2p-1}f \cdot (w_{i}(v),\ldots,w_{i}(v)) \\= {-} \eta_{j}(v) \sum_{k=0}^{2p-2} \zeta(e_{j})^k {\rm D}^{2p-2}f \cdot (\underbrace{n_{j},\ldots,n_{j}}_{k \,\, {\rm times}},\underbrace{w_{i}(v),\ldots,w_{i}(v)}_{2p-2-k \,\, {\rm times}}) \\ + \zeta(e_{j})^{2p-1} \varint_{e_{j}}^{\star}{\rm D}^{2p-1}f \cdot (n_{j},\ldots,n_{j}). \end{split}  \end{equation*}

The case $q = 2p+1$ works in a  similar way.
\end{proof}

\section{Examples}

Let us describe two examples in the $2$-dimensional case. The first one is fully explicit, in the sense that we choose a function for which the asymptotic expansion given in Theorem \ref{thm:main} is finite, and we provide every coefficient of the latter. The second one is more involved and we compute the first three terms of the asymptotic expansion and give numerical computations of the remainder.

\subsection{A fully explicit example}

Let $\Delta$ be the triangle with vertices $(0,0)$, $(0,1)$ and $(1,0)$ and let $$f:\mathbb{R}^2 \to \mathbb{R}$$ be the function defined by 
$$f(x_1,x_2) = x_1$$ (multiplied by a cutoff function so that it is compactly supported). Then
\begin{equation*} \mathbb{Z}^2 \cap N\Delta = \{ (k_{1},k_{2}) \in \mathbb{Z}^2, \quad 0 \leq k_{1} \leq N, \quad 0 \leq k_{2} \leq N-k_{1} \}. \end{equation*} 
Therefore, we have 
\begin{equation*} \frac{1}{N^2} \sum_{k \in N\Delta \cap \mathbb{Z}^2} f\left(\frac{k}{N}\right) = \frac{1}{N^2} \sum_{k_{1}=0}^N \sum_{k_{2} = 0}^{N-k_{1}} \frac{k_{1}}{N} = \frac{1}{N^3} \left( (N+1) \sum_{k_{1}=1}^N k_{1} - \sum_{k_{1}=1}^N k_{1}^2 \right). \end{equation*}
Using standard formulas for sums of integers and squares of integers, one can check that 
\begin{equation*} \frac{1}{N^2} \sum_{k \in N\Delta \cap \mathbb{Z}^2} f\left(\frac{k}{N}\right) = \frac{1}{6} + \frac{1}{2N} + \frac{1}{3N^2} . \end{equation*}
Let us compare this with Theorem~\ref{lemma:polydim2}. With the notation of this lemma, we have
\begin{equation*} T_{0}(\Delta,f) = \varint_{\Delta} f = \varint_{0}^1 \left( \varint_{0}^{1-x_{1}} {\rm d}x_{2} \right) x_{1} {\rm d}x_{1} = \frac{1}{6} \end{equation*}
so the zeroth order terms agree. Let us give names to the vertices and edges of $\Delta$ as follows: we put $v_{13} = (0,0)$, $v_{12} = (0,1)$ and $v_{23} = (1,0)$, and we let $e_{i}$ denote the edge joining the vertices $v_{ij}$ (or $v_{ji}$) and $v_{ik}$ (or $v_{ki}$). Then we have 
\begin{equation*} \varint^{\star}_{e_{1}} f = 0, \quad \varint^{\star}_{e_{2}} f = \frac{1}{2}, \quad \varint^{\star}_{e_{3}} f = \frac{1}{2}. \end{equation*}
and hence $$T_{1}(\Delta,f) = 1/2.$$ Furthermore, we have $$T_{2}(\Delta,f) = S - T$$ with 
\begin{equation*} S = \sum_{v \in V}\left(\frac{1}{4} {+} \frac{\mu(v)}{12} \right) f(v),\end{equation*}
and
\begin{equation*}  T = \frac{1}{12} \sum_{e \in E} \zeta(e) \varint_{e}^{\star} Df \cdot n_e. \end{equation*}
We have
$f(v_{13}) = 0$, $f(v_{12}) = 0$, $f(v_{23}) = 1$. Moreover
\begin{equation*} w_{1}(v_{23}) = \begin{pmatrix} -1 \\ 0 \end{pmatrix}, \quad w_{2}(v_{23}) = \begin{pmatrix} -1 \\ 1 \end{pmatrix} \end{equation*}
and thus $\mu(v_{23}) = 3/2$. This yields $$S = {3/8}.$$ Now, one can check that 
\begin{equation*} \varint^{\star}_{e_{1}} {\rm D}f \cdot n_{e_{1}}  = -1, \quad \varint^{\star}_{e_{2}}  {\rm D}f \cdot n_{e_{2}}  = 1, \quad \varint^{\star}_{e_{3}}  {\rm D}f \cdot n_{e_{3}}  = 0 \end{equation*}
and 
$\zeta(e_{1}) = -1, \quad \zeta(e_{2}) = -\frac{1}{2}$.
We obtain $T = 1/24$ and therefore $$T_{2}(\Delta,f) = {1/3}.$$ 

Finally, we have $$T_{q}(\Delta,f) = 0,\,\,\,\,\,q \geq 2$$ because the derivatives of $f$ of order greater than $2$ vanish.

\subsection{Numerical computations}

Let us present another example. We choose $\Delta$ as in the previous paragraph, and consider the function
\[ f: \R^2 \setminus \{ (x_1,x_2) \in \R^2; x_1 + x_2 = -1 \} \to \R, \qquad (x_1,x_2) \mapsto \frac{1}{1 + x_1 + x_2}. \]
We want to evaluate the sum
\[ S_N := \frac{1}{N^2} \sum_{k \in N\Delta \cap \mathbb{Z}^2} f\left(\frac{k}{N}\right) = \frac{1}{N^2} \sum_{k_{1}=0}^N \sum_{k_{2} = 0}^{N-k_{1}} \frac{1}{1+\frac{k_1 + k_2}{N}}.\]
Let us compute the coefficients $T_i(\Delta,f)$, $i=0,1,2$, of the asymptotic expansion given in Theorem \ref{thm:main}. Firstly, we have 
\[ T_0(\Delta,f) = \int_0^1\left( \int_0^{1-x_1} \frac{{\rm d}x_2}{1 + x_1 + x_2} \right) {\rm d}x_1 
= \log 2 - \int_0^1 \log(1+x_1) {\rm d}x_1 = 1 - \log 2.\]
Secondly, notice that 
\[ \varint^{\star}_{e_{1}} f = \int_0^1 \frac{{\rm d}x_2}{1 + x_2} = \log 2, \quad \varint^{\star}_{e_{2}} f = \int_0^1 \frac{{\rm d}x_1}{1 + x_1} = \log 2, \quad \varint^{\star}_{e_{1}} f = \frac{1}{2}, \]
and hence 
\[ T_1(\Delta,f) = \frac{1}{4} + \log 2. \]
Finally, since
\[ f(v_{13}) = 1, \quad f(v_{23}) = \frac{1}{2}, \quad f(v_{12}) = \frac{1}{2} \]
and
\[ \mu(v_{13}) = 0, \quad \mu(v_{23}) = \frac{3}{2}, \quad \mu(v_{12}) = \frac{3}{2} \]
we obtain
\[ \sum_{v \in V}\left(\frac{1}{4} {+} \frac{\mu(v)}{12} \right) f(v) = \frac{30}{48} \]
(we deliberately keep this fraction as it is, which will make sense in view of the next result). Moreover, we have
\[ \varint^{\star}_{e_{1}} {\rm D}f \cdot n_{e_{1}} = \int_0^1 \frac{{\rm d}x_2}{(1 + x_2)^2} = \frac{1}{2}, \quad \varint^{\star}_{e_{2}} {\rm D}f \cdot n_{e_{2}} = -\frac{1}{2}, \quad \varint^{\star}_{e_{3}} {\rm D}f \cdot n_{e_{3}} = \frac{1}{2} \]
and
\[ \zeta(e_1) = -1, \quad \zeta(e_2) = -\frac{1}{2}, \quad \zeta(e_3) = -1;\]
thus
\[ \frac{1}{12} \sum_{e \in E} \zeta(e) \varint_{e}^{\star} Df \cdot n_e = \frac{3}{48}. \]
We deduce from the previous results that 
\[ T_2(\Delta,f) = \frac{33}{48}. \]
We can now compute 
\[ P_N := T_0(\Delta,f) +  T_2(\Delta,f) N^{-1} +  T_2(\Delta,f) N^{-2}; \]
in Figure \ref{fig:comparison}, we compare the value of $P_N$ to the numerical value of $S_N$, for $N=10,25,50,75,100,250,500,750,1000$. In Figure \ref{fig:error}, we numerically evaluate the quantity
\[ R_N = \left|S_N - P_N \right|; \]
more precisely, we plot the value of $\log R_N$ as a function of $\log N$. The remainder $R_N$ displays a behavior in $\mathcal{O}(N^{-3})$, as expected.
\begin{figure}
\begin{center}
 \includegraphics[scale=0.5]{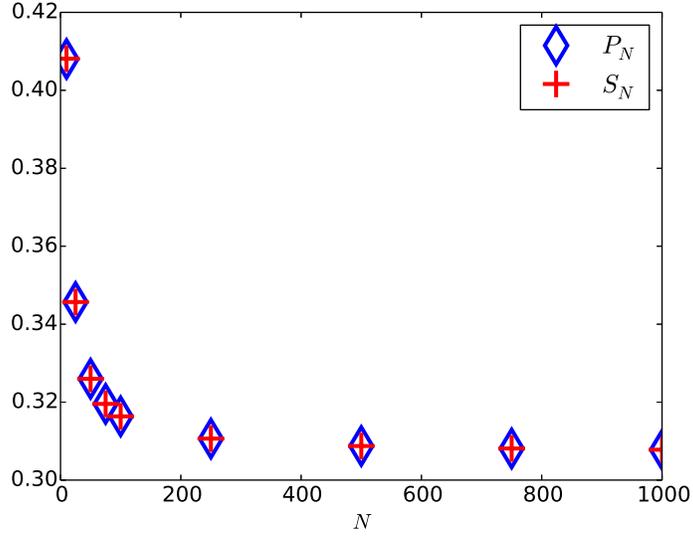}
\end{center}
 \caption{$P_N$ and $S_N$ as functions of $N$.}
 \label{fig:comparison}
\end{figure}
\begin{figure}
\begin{center}
 \includegraphics[scale=0.5]{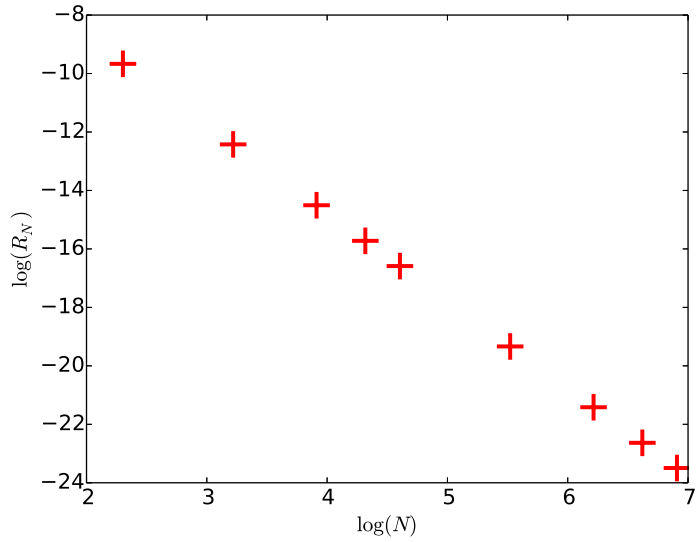}
\end{center}
 \caption{Logarithm of $R_N$ as a function of $\log N$. Notice the behavior in $\mathcal{O}(N^{-3})$ of $R_N$.}
 \label{fig:error}
\end{figure}

\section{Final remarks}

We believe that one should be able to prove item (ii) in Theorem \ref{keyT} in dimensions $n\geq 4$
with the same elementary method we use in this paper (since a similar result was already stated in Tate's article
in any dimension). However, in dimensions $n\geq 4$ the computations appear to be complicated (but
still elementary) and will be the object of future works (see also Remark~\ref{xx}).

\appendix

\section{Comparison between Tate's formula for regular wedges and ours}

We devote this appendix to comparing Tate's formula for Riemann sums over regular wedges \cite[proposition $3.1$]{Ta2010} and the first assertion of theorem \ref{keyT} of the present paper. 

\subsection{Tate's notation and result}

Firstly, we need to recall some of the notation and definitions used in Tate's article \cite{Ta2010}; most of them come from section $3$ of the latter.\\

Let $X$ be a finite dimensional real vector space, of dimension $m = \dim(X)$, and $\Lambda$ be a lattice in $X$. Let $C$ be a unimodular cone (a wedge) in $X$ and $E$ be the integral basis of $\Lambda$ generating $C$. Then $\Z_+^E$ is the set of families of non-negative integers with $\left|E\right|$ elements; an element $\alpha \in \Z_+^E$ is of the form $(\alpha(e))_{e \in E}$. If $e$ belongs to $E$, define $\lambda_e \in \Z_+^E$ in such a way that $\lambda_e(e) = 1$ and $\lambda_e(f) = 0$ if $f \neq e$. For $I \subset E$, let $\Z_+^I$ be the set of $\alpha \in \Z_+^E$ such that for all $e \in E\setminus I$, $\alpha(e) = 0$; then $\alpha \in \Z_+^E$ can be written $\alpha = \sum_{e \in E} \alpha(e) \lambda_e$. If $I \neq \emptyset$ is a subset of $E$, set
\[ \Z_{>0}^I = \left\{ \alpha \in \Z_+^I, \quad \forall e \in I, \alpha(e) \neq 0 \right\};\]
moreover, set $\Z_{>0}^{\emptyset} = \left\{ 0 \right\}$. For $I \subsetneq E$, set 
\[C(I) = \sum_{e \in E \setminus I} \R_+ e,\]
and $C(E) = \left\{ 0 \right\}$. 

For $\nu \in \Z_+^I$ and for $I \subset E$ nonempty, set
\[p_I(\nu) = \prod_{e \in I} p(\nu(e)) \]
with $p(n) = (-1)^n b_n/n!$, where $b_n$ is the $n$-th Bernoulli number. Moreover, for $\mu \in \Z_+^E$, set
\[ \nabla^{\mu} = \prod_{e \in E} \nabla_e^{\mu(e)}.\]
Finally, define the differential operator $L_{n}(C,I)$ as
\begin{equation*} L_n(C,I) = (-1)^n \sum_{\nu \in \Z_{>0}^I, |\nu|=n} p_I(\nu) \nabla^{\nu - e(I)} \end{equation*}
with $e(I) = \sum_{e \in I} \lambda_e$.

The following proposition gives an asymptotic expansion for the Riemann sum 
\begin{equation*} R_N(C,f) = \frac{1}{N^{m}} \sum_{\gamma \in (NC)\cap \Z^m} f(\gamma/N).\end{equation*}
\begin{prop}[Tate {\cite[proposition $3.1$]{Ta2010}}]
Let $f \in \op{C}^{\infty}_0(X)$. Then
\begin{equation} R_N(C,f) \sim \sum_{n \geq 0} N^{-n} \sum_{I \subset E, |I| \leq n} (-1)^{|I|} \int_{C(I)} L_n(C,I)f. \label{formula}\end{equation}
\end{prop}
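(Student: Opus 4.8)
The plan is to deduce this proposition from Proposition~\ref{keylemma} (equivalently, from part~(i) of Theorem~\ref{keyT}); this comparison is the whole point of the appendix. First I normalize the data. Since $C$ is unimodular, its generating integral basis $E$ is a $\Z$\--basis of $\Lambda$, so there is a linear isomorphism $X\to\R^m$ sending $\Lambda$ to $\Z^m$ and $E$ bijectively onto $\{-e_1,\dots,-e_m\}$, where $(e_j)$ is the standard basis; transporting everything through it we may assume $X=\R^m$, $\Lambda=\Z^m$, and $C=W$, the standard $m$\--wedge $\{x\in\R^m : x_j\le 0,\ 1\le j\le m\}$ from the proof of Proposition~\ref{keylemma}, with $u_j=e_j$, $c_j=0$, dual basis $v_j=e_j$, and with $E$ the family of inward primitive edge generators $-e_j$. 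Since $NW=W$ we have $R_N(C,f)=\frac{1}{N^m}\sum_{k\in\Z^m\cap NW}f(k/N)$, which by (\ref{eq:expansionwedge}) has the asymptotic expansion $\sum_{q\ge0}N^{-q}T_q(W,f)$. So it suffices to show that for each $n$ the coefficient of $N^{-n}$ in the right\--hand side of (\ref{formula}) equals $T_n(W,f)$.

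Next I reindex. For $I\subset E$ with $|I|\le n$, regarded as a subset of $\{1,\dots,m\}$, the set $\{\nu\in\Z_{>0}^I : |\nu|=n\}$ is coordinatewise exactly the set of $\alpha\in\N^m$ with $|\alpha|=n$ and $\{j : \alpha_j>0\}=I$, and under this bijection $e(I)$ becomes $r(\alpha)$ while Tate's face $C(I)=\sum_{j\notin I}\R_+(-e_j)$ becomes $\bigcap_{j\in I}\mathcal{H}_j$ (the limiting conventions match: $I=\emptyset$ corresponds to $W$, and $I=E$ to the vertex, with integration meaning evaluation). Since $W$ is unimodular, every $K_\alpha(W)=1$, so the lattice\--normalized measure on $C(I)$ agrees with the one denoted by the starred integral on $\bigcap_{j\in I}\mathcal{H}_j$. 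Now I unwind $L_n(C,I)$: as $\nabla_{-e_j}$ is the directional derivative along $-e_j$, namely $-\partial/\partial x_j$, one gets $\nabla^{\nu-e(I)}f=(-1)^{n-|I|}\partial^{\alpha-r(\alpha)}f$; and since $\nu$ is supported on $I$, $p_I(\nu)=\prod_{j\in I}(-1)^{\alpha_j}b_{\alpha_j}/\alpha_j!=(-1)^n\prod_{j\in I}b_{\alpha_j}/\alpha_j!$, where $b_k$ are the classical Bernoulli numbers appearing in Tate's $p(k)=(-1)^k b_k/k!$ (so $b_1=-1/2$). The four signs $(-1)^{|I|}$ (outer sum), $(-1)^n$ (prefactor of $L_n$), $(-1)^n$ (from $p_I$), $(-1)^{n-|I|}$ (from $\nabla^{\nu-e(I)}$) multiply to $(-1)^n$, so the coefficient of $N^{-n}$ in (\ref{formula}) equals $(-1)^n\sum_{|\alpha|=n}\bigl(\prod_{j\in I}b_{\alpha_j}/\alpha_j!\bigr)\int^{\star}_{\bigcap_{j\in I}\mathcal{H}_j}\partial^{\alpha-r(\alpha)}f$, with $I=\{j:\alpha_j>0\}$.

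Finally I match coefficients. For the standard wedge $v_j=e_j$, hence $T_n(W,f)=\sum_{|\alpha|=n}\bigl(\prod_{j\in I}b'_{\alpha_j}/\alpha_j!\bigr)\int^{\star}_{\bigcap_{j\in I}\mathcal{H}_j}\partial^{\alpha-r(\alpha)}f$ with $I=\{j:\alpha_j>0\}$ (using $b'_0=1$ and $0!=1$ to restrict the product to $I$), where $b'_k$ are the Bernoulli numbers of Section~\ref{mainM}, i.e. the coefficients of $s/(1-e^{-s})$. The identity $s/(1-e^{-s})=(-s)/(e^{-s}-1)$ says exactly that $b'_k=(-1)^k b_k$ for every $k$, so $\prod_{j\in I}b'_{\alpha_j}=(-1)^{\sum_{j\in I}\alpha_j}\prod_{j\in I}b_{\alpha_j}=(-1)^n\prod_{j\in I}b_{\alpha_j}$, which cancels exactly the leftover $(-1)^n$ found above; thus the two expansions agree term by term, and the proposition follows. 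I expect the only genuinely delicate point to be this sign bookkeeping: one must keep straight (a) that Tate's $\nabla_e$ differentiates along an edge generator, which after the identification points in the direction $-e_j$, and (b) the harmless but ever\--present clash between the two standard Bernoulli conventions $s/(1-e^{-s})$ and $s/(e^s-1)$, which differ only in the linear term. The remaining ingredients — the reindexing, the identification of the measures, and the degenerate faces — are routine.
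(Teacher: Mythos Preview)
Your proposal is correct and takes essentially the same route as the paper's appendix: the proposition is cited from Tate, and the only argument the paper offers is the notation dictionary in~A.2--A.3 showing that~(\ref{formula}) coincides with Proposition~\ref{keylemma}, which is exactly what you carry out. If anything, your sign bookkeeping is more explicit than the paper's---you carefully separate the two Bernoulli conventions $s/(1-e^{-s})$ versus $s/(e^s-1)$ and track the four $(-1)$ factors individually, whereas the paper absorbs these into its translation table without comment.
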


\newpage

\subsection{Comparison between Tate's notation and ours}

The following table gives an equivalence between Tate's notation and ours.

\begin{center}
\begin{tabular}{|c|c|}
\hline
Tate's notation & Our notation \\
\hline
& \\
$m$ & $n$\\
& \\
\hline 
& \\
$n$ & $q$ \\
& \\
\hline
& \\
$\nu \in \Z_+^E$ & $\alpha \in \N^n$ \\
& \\
\hline
& \\
$E$ & $(-v_1,\ldots,-v_n)$\\
& \\
\hline
& \\
$I \subset E$ (vectors) & $I \subset \llbracket 1,n \rrbracket$ (indices)\\
& \\
\hline
& \\
$C(I)$ & $\bigcap_{i \in I} \mathcal{H}_i$\\
& \\
\hline
& \\
$\Z_{>0}^I$ & $\left\{ \alpha \in \N^n, \quad \left\{ i, \alpha_i \neq 0 \right\} = I \right\}$\\
& \\
\hline
& \\
$e(\left\{ i, \alpha_i \neq 0 \right\})$ & $r(\alpha)$\\
& \\
\hline
& \\
$p_I(\nu)$ & $\frac{(-1)^{|\alpha|}}{\alpha !} \prod_{i=1}^n b_{\alpha_i}$\\
& \\
\hline
& \\
$\nabla^{\mu}f$ & $(-1)^{|\beta|} {\rm D}^{\beta}f \cdot v_{\beta}$ \\
& \\
\hline
& \\
$\int$ & $\varint^{\star}$ \\
& \\
\hline
\end{tabular}
\end{center}

\subsection{Formula (\ref{formula}) with our notation}

Using the previous comparison, $L_n(C,I)f$ becomes with our notation:
\begin{equation*} (-1)^q \sum_{\alpha \in \N^n, \left\{ i, \alpha_i \neq 0 \right\} = I, |\alpha|=q} \frac{(-1)^q}{\alpha !} \left(\prod_{i=1}^n b_{\alpha_i}\right) {\rm D}^{q-\nu(\alpha)}f \cdot v_{\alpha - r(\alpha)}; \end{equation*}
hence, formula (\ref{formula}) becomes
\begin{equation*} R_N(W,f) \sim \sum_{q \geq 0} N^{-q} \sum_{\alpha \in \N^n, |\alpha| = q} \frac{1}{\alpha !} \left(\prod_{i=1}^n b_{\alpha_i}\right) \int^{\star}_{\bigcap_{i, \alpha_i > 0} \mathcal{H}_i} {\rm D}^{q-\nu(\alpha)}f \cdot v_{\alpha - r(\alpha)} \end{equation*}
which coincides with our formula for wedges (first assertion of our main theorem \ref{keyT}).

\vspace{20mm}

\emph{Acknowledgements}. 
AP was supported by NSF CAREER Grant
DMS-1055897 and NSF Grant DMS-0635607. Work on the topic of this paper started when AP was a postdoctoral
fellow at MIT (2007\--2008) and during this period he was supported by an NSF Postdoctoral Fellowship. He
would like to thank Victor Guillemin for many useful discussions on this paper, and on many other topics
related to symplectic geometry and spectral theory, during his stay at MIT. Finally, we thank Mich\`ele Vergne
for comments on a preliminary version of this manuscript.

\vspace{5mm}
\newpage

\noindent
 {\bf  Yohann Le Floch} \\
  School of Mathematical Sciences\\
  Tel Aviv University\\
  Ramat Aviv\\ 
	Tel Aviv 6997801, Israel.\\
  {\em E-mail:} \url{ylefloch@post.tau.ac.il}\\

\noindent
{\bf \'Alvaro Pelayo}\\
University of California, San Diego\\
Department of Mathematics\\
9500 Gilman Dr, {\string #}0112\\
La Jolla, CA 92093\\
United States of America\\
{\em E\--mail}: \url{alpelayo@ucsd.edu}

\end{document}